\documentclass[11pt,a4paper]{article}

\usepackage[T1]{fontenc}
\usepackage[utf8]{inputenc}
\usepackage{lmodern}
\usepackage{xcolor}
\usepackage[margin=27mm]{geometry}
\usepackage{microtype}
\usepackage{hyperref}

\usepackage{amsmath, amsfonts, amssymb, amsthm,mathtools}
\usepackage{graphicx, mathrsfs, dsfont}
\usepackage{tikz}
\usetikzlibrary{calc,arrows.meta,decorations.pathreplacing}
\usepackage{enumitem}
\usepackage{booktabs}
\usepackage{array}
\usepackage[numbers,sort&compress]{natbib}

\newtheorem{theorem}{Theorem}[section]
\newtheorem{proposition}[theorem]{Proposition}
\newtheorem{lemma}[theorem]{Lemma}
\newtheorem{corollary}[theorem]{Corollary}
\theoremstyle{definition}
\newtheorem{definition}[theorem]{Definition}
\newtheorem{example}[theorem]{Example}
\theoremstyle{remark}

\numberwithin{equation}{section}
\setlist[enumerate,1]{label=\textup{(\roman*)},leftmargin=*,itemsep=3pt}

\DeclareMathOperator{\gph}{gph}
\DeclareMathOperator{\rg}{rg}
\newcommand{\R}{\mathbb R}
\newcommand{\N}{\mathbb N}

\title{Poly and Raby's theorem splits in infinite dimensions}
\author{David Salas}
\date{}

\begin{document}
\maketitle

\begin{abstract}
\noindent A classic result of Poly and Raby \cite{Poly-Raby:1984} states that, in finite-dimensional Euclidean spaces, $\mathcal{C}^k$-smoothness (with $k\geq2$) of the squared distance function near a point of a closed set characterizes smoothness of the set as a submanifold, with the same order of differentiability. In this work, we show that this characterization splits in infinite-dimensional Hilbert spaces: $\mathcal{C}^k$-smoothness of the squared distance function characterizes weakly $\mathcal{C}^k$-submanifolds, while $\mathcal{C}^k$-submanifolds are characterized by this smoothness together with an additional pointwise equicontinuity condition on the highest-order derivative of the squared distance. The diffeomorphism of Poly and Raby, its associated graph representation, and the description of its inverse remain the common geometric basis of both characterizations. The development of this work was assisted by GPT-6 Astra.
\end{abstract}

\noindent\textbf{Keywords.} Distance function; metric projection; submanifold; weak differentiability; pointwise equicontinuity; Hilbert space.

\section{Introduction}\label{sec:intro}

The study of the relation between the smoothness of a set and that of its distance function can be traced back to Nash \cite{Nash:1952}, who characterized analytic submanifolds in finite dimensions through the local analyticity of their metric projections.

The differentiability of the distance function was then studied in depth in the setting of infinite-dimensional convex analysis, starting with \cite{Zarantonello:1971}, with the objective of understanding the first-order variation of the metric projection onto convex sets. Shortly after, the seminal work of R.B. Holmes \cite{Holmes:1973} showed, already in infinite-dimensional Hilbert spaces, that for a convex body, the $\mathcal{C}^{k}$-smoothness of the boundary of the set, for $k\geq 2$, entails the $\mathcal{C}^{k-1}$-smoothness of the metric projection on the exterior of the body. Afterwards, in 1982, Fitzpatrick and Phelps \cite{Fitzpatrick-Phelps:1984} obtained a converse to Holmes' theorem, characterizing convex bodies with smooth boundary in terms of the smoothness of their metric projection and an invertibility condition on its derivative. The same study was conducted for nonconvex bodies in \cite{Correa-Salas-Thibault:2018,Salas-Thibault:2021}, by replacing convexity by prox-regularity, whose local theory was developed in \cite{Poliquin-Rockafellar-Thibault:2000} and whose origins go back to Federer's sets with positive reach \cite{Federer:1959}.

Returning to the original line initiated in \cite{Nash:1952}, Poly and Raby \cite{Poly-Raby:1984} proved in 1984 the following result: For $k\geq2$, a closed set $M$ in a finite-dimensional Euclidean space is a $\mathcal{C}^k$-submanifold near a point $\bar{x}\in M$ if and only if the squared distance function $d_M^2(\cdot)$ is of class $\mathcal{C}^k$ near $\bar{x}$. While this result looks similar to the previous discussion, it is structurally very different: The aforementioned works \cite{Correa-Salas-Thibault:2018,Holmes:1973,Fitzpatrick-Phelps:1984,Salas-Thibault:2021} deal with manifolds of codimension 1, as they consider the boundary of sets with nonempty interior (convex or prox-regular), while Poly and Raby deal directly with arbitrary submanifolds. An important open question in this line of research was whether the theorem of Poly and Raby could be extended to infinite dimensions. We refer the reader to \cite[Chapter 17]{Thibault:2023} for a survey of the differentiability of distance functions and metric projections.

This work is devoted to showing that the characterization of Poly and Raby splits in infinite-dimensional Hilbert spaces. On the one hand, the $\mathcal{C}^k$-smoothness of $d_M^2$ characterizes weakly $\mathcal{C}^k$-submanifolds, that is, sets admitting a local graph representation by a weakly $\mathcal{C}^k$ mapping; see Definition~\ref{def:weak-submanifold}. On the other hand, $\mathcal{C}^k$-submanifolds are characterized by the same smoothness of $d_M^2$ together with an additional pointwise equicontinuity condition in the normal coordinates of Poly and Raby. A counterexample shows that the extra condition cannot be omitted, even when the manifold is a curve.

The paper is organized as follows. Section~\ref{sec:preliminaries} recalls the weak differentiability notion used in this work, following \cite{Bachir-Lancien:2003,Gutierrez-Llavona:1993}, and introduces the notation and submanifold definitions. Section~\ref{sec:PolyRaby-coordinates} presents the diffeomorphism of Poly and Raby and the identities used throughout the paper inherited from \cite{Poly-Raby:1984}. Section~\ref{sec:Counterexample} gives the counterexample, showing that $\mathcal{C}^k$-smoothness of the squared distance function is not enough to recover $\mathcal{C}^k$-smoothness of the submanifold. Section~\ref{sec:PolyRaby-characterization} establishes the joint regularity needed for the characterization of weakly $\mathcal{C}^k$-submanifolds and proves that characterization. Section~\ref{sec:Pure-submanifolds} gives the pointwise equicontinuity criterion and the characterization of $\mathcal{C}^k$-submanifolds. Finally, Section~\ref{sec:conclusions} summarizes the results.

% ===============================
% ===============================
% ===============================
% ===============================

\section{Preliminaries}\label{sec:preliminaries}

We adopt the usual notation of Functional Analysis given in \cite{Brezis:2011}. Throughout the paper, $\mathcal{H}$ denotes a real Hilbert space, with inner product $\langle\cdot,\cdot\rangle$ and associated norm $\|\cdot\|$. Given Banach spaces $X$ and $Y$, we denote by $\mathcal{L}(X,Y)$ the space of continuous linear mappings from $X$ to $Y$, and simply by $\mathcal{L}(X)$ when $X=Y$. We write $X^*$ for the dual space of $X$. For $j\geq 1$, $\mathcal{L}^j(X;Y)$ denotes the space of continuous $j$-linear mappings from $X^j$ to $Y$, endowed with its usual operator norm; we set $\mathcal{L}^0(X;Y)=Y$. For an operator $T\in\mathcal{L}(X,Y)$, we denote by  $\rg(T)$ its range, $\mathrm{ker}(T)$ its kernel, and $T^*$ its adjoint. If $O\subset X$ is open and $f:O\to Y$, we denote by $Df(x)$ its Fr\'echet derivative at $x$, whenever it exists, and by $D^jf(x)$ its $j$th Fr\'echet derivative. For mappings of one real variable, we also use the notation $f^{(j)}$. As usual, $\mathcal{C}^k(O,Y)$ denotes the class of mappings whose Fr\'echet derivatives up to order $k$ exist and are continuous. We write $\mathcal{C}^{k,0}$ and $\mathcal{C}^{k,1}$ when the $k$th derivative is, respectively, locally uniformly continuous and locally Lipschitz, as in \cite{Salas-Thibault:2019}.

We use the following notation for partial derivatives: Let $X,Y,E$ be Banach spaces, let $U\subset X$ be open, and let $G\in\mathcal{C}^p(U\times Y,E)$, with $p\geq1$. For $0\leq j\leq p$, we write
\begin{equation}\label{eq:pure-first-partial-definition}
D_1^jG(u,y):=D^j\bigl(G(\cdot,y)\bigr)(u)\in\mathcal{L}^j(X;E),
\end{equation}
with $D_1^0G=G$. That is, $D_1^jG$ denotes the $j$th derivative with respect to the first variable, with $y$ fixed. Under the natural identification of spaces of multilinear mappings, we have that
\[
D_1^jG=D_1\bigl(D_1^{j-1}G\bigr),\qquad 1\leq j\leq p.
\]
For a nonempty closed set $M\subset\mathcal{H}$, we denote by $d_M(x):=\inf_{m\in M}\|x-m\|$ its distance function and by $\operatorname{Proj}_M(x)$ the set of nearest points of $x$ in $M$. When this set is a singleton, its element is called metric projection and it is denoted by $P_M(x)$. We recall that $d_M^2$ is Fr\'echet differentiable on an open set $U$ if and only if $P_M$ is well defined and continuous on $U$; in this case,
\begin{equation}\label{eq:distance-gradient}
\nabla\!\left(\frac12d_M^2\right)(x)=x-P_M(x),\qquad \forall x\in U;
\end{equation}
see, e.g., \cite[Lemma~5(e)]{Colombo-Thibault:2010}. A closed set $M$ is prox-regular at $\bar{x}\in M$ if and only if $P_M$ is single-valued and Lipschitz on a neighborhood of $\bar{x}$; see \cite[Theorem~2.2]{Correa-Salas-Thibault:2018}. For a closed subspace $Z\subset\mathcal{H}$, we write $\pi_Z$ for the orthogonal projection onto $Z$. We use the Hilbert product $Z\times Z^\perp$, endowed with the norm $\|(z,w)\|^2=\|z\|^2+\|w\|^2$, and identify it with $\mathcal{H}$ through the canonical linear isometry
\begin{align*}
L:\mathcal{H}&\to Z\times Z^\perp\\
x&\mapsto L(x)=(\pi_Z(x),\pi_{Z^\perp}(x)).
\end{align*}
Once this identification is made, we write points of $\mathcal{H}$ as pairs $(z,w)$ and use $\pi_Z$ and $\pi_{Z^\perp}$ also for the coordinate projections. All graph representations and normal-coordinate formulas below are written in this product convention.

We recall the notion of submanifold of a (possibly infinite-dimensional) Hilbert space used throughout the paper; see, e.g., \cite{Abraham-Marsden-Ratiu:1988}. For $k\geq 1$, a subset $M$ of $\mathcal{H}$ is a $\mathcal{C}^k$-submanifold near $\bar{x}\in M$ if there exist an open neighborhood $U$ of $\bar{x}$, a closed subspace $E$ of $\mathcal{H}$, and a $\mathcal{C}^k$-diffeomorphism $\Phi:U\to\Phi(U)\subset\mathcal{H}$ such that
\[
\Phi(\bar{x})=0,\qquad \Phi(M\cap U)=E\cap\Phi(U).
\]
The space $E$ is called a model space for $M$ near $\bar{x}$. For $x\in U\cap M$, we write $T_{x}M = D\Phi(x)^{-1}(E)$ and $N_{x}M = (T_xM)^{\perp}$ to denote the tangent space and normal space at $x$, respectively.

Equivalently, after translating $\bar{x}$ to $0$, the set $M$ is a $\mathcal{C}^k$-submanifold near $0$ if and only if there exist a closed subspace $Z\subset\mathcal{H}$, an open neighborhood $V$ of $0$ in $Z$, and a $\mathcal{C}^k$ mapping $\theta:V\to Z^\perp$ such that $M$ coincides locally with $\gph(\theta)$, with $\theta(0)=0$ and $D\theta(0)=0$. In this representation, $Z$ is necessarily the tangent space $T_0M$; see, e.g., \cite[Theorem~2.1]{Salas-Thibault:2019}.

Similarly, $M$ is a $\mathcal{C}^{k,0}$-submanifold (respectively, a $\mathcal{C}^{k,1}$-submanifold) near $\bar{x}$ if the local graph representative $\theta$ can be chosen of class $\mathcal{C}^{k,0}$ (respectively, $\mathcal{C}^{k,1}$); see \cite[Section~4]{Salas-Thibault:2019}.

We next recall the weak differentiability notion used in this work, following \cite{Bachir-Lancien:2003}.

\begin{definition}[Weakly Fréchet differentiability]
Let $X,Y$ be Banach spaces, $O\subset X$ be open, and $f:O\to Y$. We say that $f$ is \emph{weakly Fr\'echet differentiable} at $x\in O$ if $y^*\circ f$ is Fr\'echet differentiable at $x$ for every $y^*\in Y^*$. Likewise, for an integer $k\geq 1$, we say that $f$ is \emph{weakly $\mathcal{C}^k$} on $O$ if $y^*\circ f\in\mathcal{C}^k(O,\R)$ for every $y^*\in Y^*$. 
\end{definition}

It is known that weakly $\mathcal{C}^k$ mappings are of class $\mathcal{C}^{k-1}$; see \cite{Gutierrez-Llavona:1993}. Inspired by graph representation and the concept of weakly $\mathcal{C}^k$ functions, we introduce the following graph-based notion of weak regularity for submanifolds.

\begin{definition}[weakly $\mathcal{C}^k$-submanifold]\label{def:weak-submanifold}
Let $k\geq2$, $M\subset\mathcal{H}$, and $\bar{x}\in M$. We say that $M$ is a \emph{weakly $\mathcal{C}^k$-submanifold} near $\bar{x}$ if there exist a closed subspace $Z\subset\mathcal{H}$, an open neighborhood $V$ of $0$ in $Z$, and a weakly $\mathcal{C}^k$ mapping $\theta:V\to Z^\perp$ with $\theta(0)=0$ and $D\theta(0)=0$ such that, after translating $\bar{x}$ to $0$ and identifying $\mathcal{H}$ with $Z\times Z^\perp$, $M$ coincides locally with $\gph(\theta)$.
\end{definition}

% ===============================
% ===============================
% ===============================
% ===============================

\section{The diffeomorphism of Poly and Raby}\label{sec:PolyRaby-coordinates}

The geometric construction behind the characterizations of this work is the coordinate system presented in \cite{Poly-Raby:1984} (which extends directly to Hilbert spaces as developed in \cite[Lemma~5.2 and the proof of Theorem~5.1]{Salas-Thibault:2019}). More precisely, assuming that $\bar{x}=0$ and writing $Z=\operatorname{rg}(DP_M(0))$, the authors considered the mapping
\[
\Psi(x)=\bigl(\pi_Z(P_M(x)),\,\pi_{Z^\perp}(x-P_M(x))\bigr),
\]
which we now call the \emph{Poly--Raby coordinates}. Next proposition summarizes the main properties of the Poly--Raby coordinates, including the graph representation they induce.
% An inverse function argument gives a local diffeomorphism and a graph representation $\theta:V\subset Z\to Z^\perp$ of $M$ with $\theta(0)=0$ and $D\theta(0)=0$. In the product coordinates $Z\times Z^\perp$, the inverse is given by
% \[
% \Psi^{-1}(z,w)=\bigl(z-D\theta(z)^*w,\,\theta(z)+w\bigr),
% \]
% and one has
% \begin{equation}\label{eq:intro-adjoint-identity}
% D\theta(z)^*w=-\frac12\pi_Z\nabla d_M^2\bigl(\Psi^{-1}(z,w)\bigr).
% \end{equation}

%We recall this construction in Section~\ref{sec:PolyRaby-coordinates}. 

% We now study the original construction of Poly and Raby \cite{Poly-Raby:1984}. The diffeomorphism $\Psi$, the graph representation that it induces, and the description of its inverse presented in the Introduction form the geometric basis of the theorem of Poly and Raby \cite{Poly-Raby:1984}. These constructions extend directly to Hilbert spaces, as shown in \cite[Lemma~5.2 and the proof of Theorem~5.1]{Salas-Thibault:2019}. We recall them here in a form that will be used in both directions of our characterization.

\begin{proposition}[Poly--Raby coordinates]\label{prop:PolyRaby-coordinates}
Let $M\subset\mathcal{H}$ be a closed set containing $0$, and let $k\geq2$. Suppose that $d_M^2$ is of class $\mathcal{C}^k$ near $0$. Then $Z:=\rg(DP_M(0))$ is a closed subspace and $DP_M(0)=\pi_Z$. There exist an open neighborhood $U$ of $0$ in $\mathcal{H}$, an open neighborhood $V$ of $0$ in $Z$, and $\delta>0$ such that
\begin{equation}\label{eq:PolyRaby-Psi}
\Psi:U\to V\times B_{Z^\perp}(0,\delta),\qquad
\Psi(x)=\bigl(\pi_Z(P_M(x)),\,\pi_{Z^\perp}(x-P_M(x))\bigr),
\end{equation}
is a $\mathcal{C}^{k-1}$-diffeomorphism. Moreover, the mapping
\begin{equation*}%\label{eq:PolyRaby-graph}
\begin{aligned}
\theta:V&\to Z^\perp\\
z&\mapsto \theta(z):=\pi_{Z^\perp}\Psi^{-1}(z,0),
\end{aligned}
\end{equation*}
is of class $\mathcal{C}^{k-1}$, satisfies $\theta(0)=0$ and $D\theta(0)=0$, and represents $M$ locally as a graph. For every $(z,w)\in V\times B_{Z^\perp}(0,\delta)$, one has
\begin{align}
P_M\bigl(\Psi^{-1}(z,w)\bigr)&=(z,\theta(z)),\label{eq:PolyRaby-projection}\\
\Psi^{-1}(z,w)&=\bigl(z-D\theta(z)^*w,\,\theta(z)+w\bigr),\label{eq:PolyRaby-inverse}\\
D\theta(z)^*w&=-\frac12\pi_Z\nabla d_M^2\bigl(\Psi^{-1}(z,w)\bigr).\label{eq:PolyRaby-adjoint}
\end{align}
\end{proposition}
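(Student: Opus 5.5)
The plan is to work from the identity $P_M=\mathrm{Id}-\frac12\nabla d_M^2$. Since $d_M^2$ is $\mathcal{C}^k$ near $0$, this identity holds on a ball around $0$ by \eqref{eq:distance-gradient}, so $P_M$ is $\mathcal{C}^{k-1}$ there.

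\textbf{Step 1: $DP_M(0)$ is an orthogonal projection.} The operator $A:=DP_M(0)=I-\frac12D^2d_M^2(0)$ is self-adjoint. Moreover $P_M\circ P_M=P_M$ near $0$, because $P_M(x)\in M$ and $P_M(m)=m$ for $m\in M$. Since $P_M(0)=0$, differentiating at $0$ gives $A^2=A$. A self-adjoint idempotent is an orthogonal projection, so $Z=\rg(A)$ is closed and $A=\pi_Z$.

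\textbf{Step 2: $\Psi$ is a local $\mathcal{C}^{k-1}$-diffeomorphism.} $\Psi$ is $\mathcal{C}^{k-1}$ with $\Psi(0)=0$. Its derivative at $0$ is
\[
D\Psi(0)h=\bigl(\pi_Z\pi_Zh,\ \pi_{Z^\perp}(h-\pi_Zh)\bigr)=(\pi_Zh,\pi_{Z^\perp}h)=L(h),
\]
which is an isometric isomorphism. The inverse function theorem (valid for $\mathcal{C}^{k-1}$ with $k-1\geq1$) therefore gives a local $\mathcal{C}^{k-1}$-diffeomorphism. Shrinking the domain, I take the target to be a product $V\times B_{Z^\perp}(0,\delta)$ and set $U:=\Psi^{-1}(V\times B_{Z^\perp}(0,\delta))$. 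Then $\theta(z)=\pi_{Z^\perp}\Psi^{-1}(z,0)$ is $\mathcal{C}^{k-1}$ with $\theta(0)=0$.

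\textbf{Step 3: the formulas and the graph representation.} This is the core and the main obstacle: one must show that $w=0$ exactly describes $M$, and that the projection depends only on $z$.

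Fix $(z,w)$, let $x=\Psi^{-1}(z,w)$ and $p=P_M(x)$. Write $\Psi^{-1}(z,0)=(a(z),\theta(z))$. The first concern is that $\Psi^{-1}(z,0)$ lie in $M$. If $x\in U$ satisfies $\pi_{Z^\perp}(x-P_M(x))=0$, then $x-p\in Z$. On the other hand, $x-p=\frac12\nabla d_M^2(x)$ is normal to $M$ at $p$: it is a proximal normal, and it is orthogonal to $\rg DP_M(p)$. I would first show $x-p\perp\rg DP_M(p)$ by differentiating $t\mapsto d_M^2(p+t(x-p))=t^2\|x-p\|^2$ for $t\in[0,1]$.

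The cleaner route, which I would use, avoids this. Points of $M$ near $0$ satisfy $\Psi(m)=(\pi_Zm,0)$, so $M\cap U\subset\Psi^{-1}(V\times\{0\})$. For the reverse inclusion I compare points. If $x=\Psi^{-1}(z,0)$, then $p=P_M(x)\in M$ is near $0$, hence, after shrinking $U$, $p\in U$. Its coordinates are $\Psi(p)=(\pi_Zp,0)=(z,0)=\Psi(x)$, and injectivity gives $x=p\in M$. The same comparison handles general $w$: $\Psi(p)=(z,0)$ gives $p=\Psi^{-1}(z,0)=(a(z),\theta(z))$.

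Next I show $a(z)=z$. From $\pi_Zp=z$ and $p=(a(z),\theta(z))$ we get $a(z)=\pi_Zp=z$. Hence $M\cap U=\gph\theta$ and \eqref{eq:PolyRaby-projection} holds.

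Then $x=p+(x-p)$ with $\pi_{Z^\perp}(x-p)=w$ and $\pi_Z(x-p)=-\frac12\pi_Z\nabla d_M^2(x)$, so \eqref{eq:PolyRaby-inverse} and \eqref{eq:PolyRaby-adjoint} reduce to showing
\[
\pi_Z(x-p)=-D\theta(z)^*w.
\]
Since $x-p$ is a proximal normal to $M$ at $p$, it is orthogonal to the tangent vectors $(h,D\theta(z)h)$ of the $\mathcal{C}^1$ graph. To see this, differentiate $s\mapsto\|x-(z+sh,\theta(z+sh))\|^2$ at its minimum $s=0$. This gives $\langle\pi_Z(x-p),h\rangle+\langle w,D\theta(z)h\rangle=0$ for all $h\in Z$, which is exactly the claim.

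Finally, $D\theta(0)=0$ follows from \eqref{eq:PolyRaby-projection}. Differentiating $P_M(z,0)=(z,\theta(z))$ at $0$ in the direction $h\in Z$, and using $DP_M(0)=\pi_Z$, gives $(h,D\theta(0)h)=(h,0)$.

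The delicate point is the bookkeeping of neighborhoods: one must ensure $p\in U$ and $\pi_Zp\in V$ so that the injectivity argument applies. I would handle it by continuity of $P_M$ at $0$, shrinking $V$ and $\delta$ after first choosing the domain on which $\Psi$ is injective.
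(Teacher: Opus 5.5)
Your route is the paper's: self-adjointness plus $DP_M(0)^2=DP_M(0)$ gives $DP_M(0)=\pi_Z$, and the inverse function theorem applies since $D\Psi(0)=L$. The comparison $\Psi(P_M(x))=(\pi_ZP_M(x),0)$ with injectivity yields $P_M(\Psi^{-1}(z,w))=\Psi^{-1}(z,0)$ and the graph representation. Normality of $x-P_M(x)$ then gives \eqref{eq:PolyRaby-inverse}; your first-order computation along $s\mapsto(z+sh,\theta(z+sh))$ just makes explicit the normal-space formula \eqref{eq:graph-normal-space} that the paper quotes. Two local slips need correcting, though, both easily repaired.

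First, by \eqref{eq:distance-gradient} you have $x-p=\frac12\nabla d_M^2(x)$, as you state earlier. Hence $\pi_Z(x-p)=+\frac12\pi_Z\nabla d_M^2(x)$, not $-\frac12\pi_Z\nabla d_M^2(x)$. With your sign, combining it with $\pi_Z(x-p)=-D\theta(z)^*w$ would give the opposite of \eqref{eq:PolyRaby-adjoint}.

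Second, in the $D\theta(0)=0$ step, the identity $P_M(z,0)=(z,\theta(z))$ is false for the point $(z,0)\in\mathcal H$. For the unit circle centred at $(0,1)$ in $\R^2$, one has $Z=\R\times\{0\}$, and $P_M(z,0)$ has first coordinate $z/\sqrt{1+z^2}\neq z$. What \eqref{eq:PolyRaby-projection} actually gives is $P_M(\Psi^{-1}(z,0))=(z,\theta(z))$, and differentiating that requires $D\Psi^{-1}(0)=L^{-1}$ anyway. So simply differentiate $\Psi^{-1}(z,0)=(z,\theta(z))$ at $0$, as the paper does: $(h,D\theta(0)h)=L^{-1}(h,0)=(h,0)$. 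Alternatively, differentiate the true identity $P_M(x)=\bigl(\pi_ZP_M(x),\theta(\pi_ZP_M(x))\bigr)$ at $x=0$, which gives $\pi_Zh=(\pi_Zh,D\theta(0)\pi_Zh)$ for all $h$, hence $D\theta(0)=0$.
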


\begin{proof}
By \eqref{eq:distance-gradient}, $P_M$ is well defined and of class $\mathcal{C}^{k-1}$ near $0$. Differentiating $P_M\circ P_M=P_M$ at $0$ gives $DP_M(0)^2=DP_M(0)$. Furthermore, $DP_M(0)$ is symmetric, since $P_M$ is the gradient of the convex function $x\mapsto\frac12\|x\|^2-\frac12d_M^2(x)$; see \cite{Aslpund:1968}. Hence $DP_M(0)=\pi_Z$ for the closed subspace $Z=\rg(DP_M(0))$.

Consider $\Psi$ as in \eqref{eq:PolyRaby-Psi}, initially on a neighborhood of $0$. Its derivative at $0$ is the canonical isometry $L$ of Section~\ref{sec:preliminaries}, so the Inverse Function Theorem gives a local $\mathcal{C}^{k-1}$-diffeomorphism. Shrink its domain so that it is the inverse image of a product $V\times B_{Z^\perp}(0,\delta)$ and both $x$ and $P_M(x)$ remain in the original injectivity neighborhood. The identity
\[
\Psi(P_M(x))=(\pi_Z(P_M(x)),0)
\]
then gives
\[
P_M\bigl(\Psi^{-1}(z,w)\bigr)=\Psi^{-1}(z,0).
\]
In particular, $\Psi^{-1}(z,0)\in M$ and its first coordinate is $z$. Thus the function $\theta$ gives the asserted graph representation and \eqref{eq:PolyRaby-projection}. The identities $\theta(0)=0$ and $D\theta(0)=0$ follow from $D\Psi(0)=L$.

For a $\mathcal{C}^1$ graph, its normal space, as the orthogonal complement of the tangent space (see, e.g., \cite{Abraham-Marsden-Ratiu:1988}), is given by
\begin{equation}\label{eq:graph-normal-space}
N_{(z,\theta(z))}M
=\{(-D\theta(z)^*\eta,\eta):\eta\in Z^\perp\}.
\end{equation}
Fix $x=\Psi^{-1}(z,w)$. Since $x-P_M(x)$ is normal to $M$ at $P_M(x)$ and its second coordinate is $w$, we obtain
\[
x-P_M(x)=(-D\theta(z)^*w,w).
\]
Together with \eqref{eq:PolyRaby-projection}, this proves \eqref{eq:PolyRaby-inverse}. Finally, \eqref{eq:distance-gradient} gives \eqref{eq:PolyRaby-adjoint}.
\end{proof}

The same coordinates can also be obtained from the graph representation. We separate its geometric argument from the regularity required to apply the Inverse Function Theorem.

\begin{proposition}[Inverse of Poly--Raby coordinates]\label{prop:PolyRaby-from-graph}
Let $M\subset\mathcal{H}$ be closed and coincide near $0$ with the graph of a $\mathcal{C}^1$ mapping $\theta:V\subset Z\to Z^\perp$, with $\theta(0)=0$ and $D\theta(0)=0$. Suppose that $P_M$ is well defined and continuous near $0$ and that, for an integer $r\geq1$, the normal-coordinate mapping
\begin{equation}\label{eq:PolyRaby-psi}
\psi:V\times Z^\perp\to Z\times Z^\perp,\qquad
\psi(z,w)=\bigl(z-D\theta(z)^*w,\,\theta(z)+w\bigr),
\end{equation}
is of class $\mathcal{C}^r$ near $(0,0)$. Then $\psi$ is a local $\mathcal{C}^r$-diffeomorphism, and its inverse is precisely the mapping $\Psi$ in \eqref{eq:PolyRaby-Psi}. In particular,
\begin{equation}\label{eq:PolyRaby-projection-from-graph}
P_M(x)=\bigl(\pi_Z\Psi(x),\,\theta(\pi_Z\Psi(x))\bigr)
\end{equation}
for $x$ close enough to $0$.
\end{proposition}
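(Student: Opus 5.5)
We first compute the derivative of $\psi$ at $(0,0)$ and then apply the Inverse Function Theorem. Since $\theta(0)=0$ and $D\theta(0)=0$, we have $\psi(0,0)=(0,0)$. The term $D\theta(z)^*w$ is bilinear-like in $w$ and vanishes at $w=0$, with $D\theta(0)^*=0$. Hence its derivative at $(0,0)$ in the direction $(h,k)$ is $D\theta(0)^*k=0$. Here we only use that $\psi$ is $\mathcal{C}^r$, so its derivative exists. To identify it we can compute directional derivatives: along $w$ with $z=0$ the term is identically $0$, and along $z$ with $w=0$ it is also $0$. So $D\psi(0,0)(h,k)=(h,k)$, which under the product convention is the identity, i.e.\ the inverse of the isometry $L$. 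The Inverse Function Theorem then yields neighborhoods $W$ of $(0,0)$ in $Z\times Z^\perp$ and $U'$ of $0$ in $\mathcal{H}$ such that $\psi:W\to U'$ is a $\mathcal{C}^r$-diffeomorphism.

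Next we identify $\psi^{-1}$ with $\Psi$, which is the geometric core. Take $x\in U'$ and write $x=\psi(z,w)$ with $(z,w)\in W$. The point $m:=(z,\theta(z))$ lies in $M$, and by \eqref{eq:graph-normal-space} we have $x-m=(-D\theta(z)^*w,w)\in N_mM$. So $x-m$ is a normal vector at $m$; this is only a first-order (stationarity) condition and does not by itself make $m$ the nearest point. We close the gap by a uniqueness/continuity argument. For $x$ near $0$, continuity of $P_M$ with $P_M(0)=0$ puts $P_M(x)$ near $0$, hence in the graph part, so $P_M(x)=(z',\theta(z'))$ with $z'$ small. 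The first-order optimality condition for minimizing $\zeta\mapsto\|x-(\zeta,\theta(\zeta))\|^2$ over $V$ at the interior point $z'$ shows $x-P_M(x)\in N_{P_M(x)}M$. By \eqref{eq:graph-normal-space}, $x-P_M(x)=(-D\theta(z')^*w',w')$ where $w'=\pi_{Z^\perp}(x-P_M(x))$, which is small. Therefore $x=\psi(z',w')$. Both $(z,w)$ and $(z',w')$ lie in $W$, the latter after shrinking $U'$ by continuity of $P_M$. Injectivity of $\psi$ on $W$ then gives $(z,w)=(z',w')$.

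Consequently $P_M(x)=(z,\theta(z))$ and $\pi_{Z^\perp}(x-P_M(x))=w$. This is exactly $\Psi(x)=(\pi_Z P_M(x),\pi_{Z^\perp}(x-P_M(x)))=(z,w)=\psi^{-1}(x)$. Formula \eqref{eq:PolyRaby-projection-from-graph} follows immediately, since $\pi_Z\Psi(x)=z$.

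The main obstacle is the step just described: a normal vector at a point of the graph only gives a critical point of the distance, not the minimizer. The argument above handles it by showing that the true projection also produces a preimage under $\psi$ in $W$, and then invoking local injectivity. The delicate bookkeeping is choosing $U'$ small enough that $P_M(x)$ stays in the graph chart and that $w'$ and $z'$ stay in $W$. This uses only the continuity of $P_M$ at $0$ and the fact that $\theta$ is $\mathcal{C}^1$.
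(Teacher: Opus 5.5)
Your proof is correct and follows essentially the same route as the paper: the Inverse Function Theorem at $(0,0)$ with $D\psi(0,0)=\operatorname{id}$, then $P_M(x)=(z',\theta(z'))$ by continuity of $P_M$, then \eqref{eq:graph-normal-space} to get $x=\psi(z',w')$ with $w'=\pi_{Z^\perp}(x-P_M(x))$, and finally local injectivity. The only difference is cosmetic: the paper does not first write $x=\psi(z,w)$ and compare, but directly notes that $(z',w')=\Psi(x)$ lies in the injectivity neighborhood, so $\psi^{-1}(x)=\Psi(x)$.
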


\begin{proof}
In the product identification, $\psi(0,0)=0$ and $D\psi(0,0)=\operatorname{id}_{Z\times Z^\perp}$. The Inverse Function Theorem therefore gives a local $\mathcal{C}^r$-diffeomorphism. For $x$ close enough to $0$, continuity of $P_M$ allows us to write $P_M(x)=(z,\theta(z))$ for some sufficiently small $z\in Z$. Since $x-P_M(x)$ is normal to $M$ at $(z,\theta(z))$,~\eqref{eq:graph-normal-space} yields that there is $w\in Z^{\perp}$ such that $x-P_M(x) = (-D\theta(z)^*w,w)$. Thus, we can write
\[
x=(z,\theta(z)) + (-D\theta(z)^*w,w) = \psi(z,w),\qquad w=\pi_{Z^\perp}(x-P_M(x)).
\]
Both $z$ and $w$ tend to $0$ as $x\to0$, so they lie in the local injectivity neighborhood of $\psi$. Consequently,
\[
\psi^{-1}(x)=(z,w)
=\bigl(\pi_Z(P_M(x)),\,\pi_{Z^\perp}(x-P_M(x))\bigr)=\Psi(x),
\]
which also proves \eqref{eq:PolyRaby-projection-from-graph}.
\end{proof}

In \cite{Poly-Raby:1984}, the Poly--Raby coordinates are used to show that in the finite-dimensional setting, $M$ is a $\mathcal{C}^k$-submanifold near $0$ if and only if $d_M^2$ is of class $\mathcal{C}^k$. The necessary condition follows directly from Proposition~\ref{prop:PolyRaby-from-graph}, while the sufficiency is more delicate: A compactness argument allows to use \eqref{eq:PolyRaby-adjoint} to derive $\mathcal{C}^{k-1}$-smoothness of $z\mapsto D\theta(z)^*$ from the $\mathcal{C}^{k-1}$-smoothness of $\nabla d_M^2\circ \Psi^{-1}$. Note that this argument also works if the manifold $M$ has finite codimiension. This leads to the following corollary.

\begin{corollary}[{\cite{Poly-Raby:1984}}] Let $M\subset\mathcal{H}$ be a closed set, $k\geq2$, and $\bar{x}\in M$. The following assertions are equivalent:
\begin{enumerate}
\item $d_M^2$ is of class $\mathcal{C}^k$ near $\bar{x}$ and $\mathrm{ker}(DP_M(0))$ has finite dimension.
\item $M$ is a $\mathcal{C}^k$-submanifold near $\bar{x}$ with finite codimension.
\end{enumerate} 
\end{corollary}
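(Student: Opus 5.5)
We translate $\bar{x}$ to $0$ and prove both implications using the Poly--Raby coordinates.

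\emph{(ii)$\Rightarrow$(i).} Take a local graph representation $\theta\in\mathcal{C}^k(V,Z^\perp)$ with $\theta(0)=0$, $D\theta(0)=0$, where $Z=T_0M$ has finite codimension. A $\mathcal{C}^k$-submanifold with $k\ge2$ is prox-regular near $0$, because locally it is a $\mathcal{C}^{1,1}$ graph. Hence $P_M$ is single-valued and Lipschitz, and in particular continuous, near $0$. The map $\psi(z,w)=(z-D\theta(z)^*w,\theta(z)+w)$ is of class $\mathcal{C}^{k-1}$, since $z\mapsto D\theta(z)^*$ is $\mathcal{C}^{k-1}$. Proposition~\ref{prop:PolyRaby-from-graph} with $r=k-1$ shows that $\Psi=\psi^{-1}$ is $\mathcal{C}^{k-1}$, and then \eqref{eq:PolyRaby-projection-from-graph} shows that $P_M$ is $\mathcal{C}^{k-1}$. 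By \eqref{eq:distance-gradient}, $\nabla d_M^2=2(\mathrm{id}-P_M)$ is $\mathcal{C}^{k-1}$, so $d_M^2$ is $\mathcal{C}^k$. Differentiating \eqref{eq:PolyRaby-projection-from-graph} at $0$, using $D\Psi(0)=L$ and $D\theta(0)=0$, gives $DP_M(0)=\pi_Z$. Therefore $\ker DP_M(0)=Z^\perp$, which is finite-dimensional.

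\emph{(i)$\Rightarrow$(ii).} Proposition~\ref{prop:PolyRaby-coordinates} gives $DP_M(0)=\pi_Z$, so $Z^\perp=\ker DP_M(0)$ has some finite dimension $m$. It also gives a $\mathcal{C}^{k-1}$ graph representation $\theta$, and it remains to upgrade $\theta$ to $\mathcal{C}^k$. Fix an orthonormal basis $e_1,\dots,e_m$ of $Z^\perp$ and write $\theta=\sum_i \theta_i e_i$ with $\theta_i=\langle\theta,e_i\rangle\in\mathcal{C}^{k-1}(V,\R)$. Then $D\theta(z)^*e_i=\nabla\theta_i(z)\in Z$. Apply \eqref{eq:PolyRaby-adjoint} with $w=te_i$ for a fixed small $t\in(0,\delta)$ (so that $te_i\in B_{Z^\perp}(0,\delta)$):
\[
\nabla\theta_i(z)=-\frac{1}{2t}\,\pi_Z\nabla d_M^2\bigl(\Psi^{-1}(z,te_i)\bigr).
\]
The right-hand side is a composition of $\nabla d_M^2\in\mathcal{C}^{k-1}$, the bounded linear map $\pi_Z$, and the $\mathcal{C}^{k-1}$ map $z\mapsto\Psi^{-1}(z,te_i)$. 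Hence $\nabla\theta_i\in\mathcal{C}^{k-1}$, so each $\theta_i\in\mathcal{C}^k$.

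Since $m$ is finite, $\theta=\sum_{i=1}^m\theta_ie_i$ is of class $\mathcal{C}^k$. This is the only place finite codimension is used: $D\theta(z)^*$ is a linear map on the finite-dimensional space $Z^\perp$, so its regularity in operator norm follows from its regularity on a basis. (This replaces the compactness argument mentioned in the text.) Thus $M$ is a $\mathcal{C}^k$-submanifold near $0$, with model space $Z$ of codimension $m$.

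The main obstacle is the step just described: recovering the $\mathcal{C}^k$ regularity of $\theta$ from that of $d_M^2$. In infinite codimension the identity \eqref{eq:PolyRaby-adjoint} only controls $z\mapsto D\theta(z)^*w$ for each fixed $w$, which is exactly the weak regularity discussed in the abstract. Finite dimensionality of $Z^\perp$ is what turns this pointwise information into control in operator norm. A minor point to check is that $t$ is fixed and $\Psi^{-1}$ is defined on $V\times B_{Z^\perp}(0,\delta)$, which holds by the choice of $t$.
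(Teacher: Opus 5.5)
Your proof is correct and takes essentially the paper's route. Necessity comes from Proposition~\ref{prop:PolyRaby-from-graph} together with the computation $DP_M(0)=\pi_Z$. Sufficiency comes from identity~\eqref{eq:PolyRaby-adjoint}, which is exactly the weak-$\mathcal{C}^k$ step in the proof of Theorem~\ref{thm:weak-characterization}. The only difference is in how finite codimension is used: the paper invokes a compactness argument to pass from fixed-$w$ regularity to operator-norm regularity of $z\mapsto D\theta(z)^*$, while you decompose $\theta$ along an orthonormal basis of the finite-dimensional $Z^\perp$, which is equivalent and slightly more elementary.
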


The obstruction to extend this corollary in infinite dimensions, as discussed in \cite{Salas-Thibault:2019}. is not the construction of $\Psi$, the graph representation, or the formula for the inverse. It is precisely the  argument to pass from the smoothness of $z\mapsto D\theta(z)^*w$ for each fixed $w$ to the same smoothness of the operator-valued mapping $z\mapsto D\theta(z)^*$.

% In finite dimensions, as in \cite{Poly-Raby:1984}, and more generally when the submanifold has finite codimension, this is not an issue, since it follows directly from a uniform continuity argument. In \cite{Salas-Thibault:2019}, the authors addressed this last step through a local uniform equicontinuity hypothesis and obtained the characterizations
% \begin{equation}\label{eq:characs-SalasThibault}
% \begin{aligned}
%     d_M^2\in\mathcal{C}^{k,0}\text{ near }\bar{x}
%     &\iff M\text{ is a }\mathcal{C}^{k,0}\text{-submanifold near }\bar{x},\\
%     d_M^2\in\mathcal{C}^{k,1}\text{ near }\bar{x}
%     &\iff M\text{ is a }\mathcal{C}^{k,1}\text{-submanifold near }\bar{x},
% \end{aligned}
% \end{equation}
% where the indices $0$ and $1$ stand for a locally uniformly continuous and a locally Lipschitz continuous $k$th derivative, respectively. The case where $d_M^2$ is merely of class $\mathcal{C}^k$ remained open. Our results show that the same geometric argument is tight in this case as well: it yields exactly weak $\mathcal{C}^k$ regularity, and pointwise equicontinuity of the last tangential derivative in \eqref{eq:intro-adjoint-identity} is precisely the additional condition needed to recover $\mathcal{C}^k$ regularity. Here, equicontinuity is required at each point of the graph domain, uniformly over a fixed small normal ball, but not uniformly between arbitrary pairs of points in a fixed neighborhood.

% ===============================
% ===============================
% ===============================
% ===============================

\section{A tight example}\label{sec:Counterexample}

In this section, we show that the obstruction observed in \cite{Salas-Thibault:2019} to extending the theorem of Poly and Raby to infinite dimensions is genuine: for every $k\geq2$, there is a closed set $M\subset\R\times\ell^2$ containing $0$ for which $d_M^2$ is of class $\mathcal{C}^k$ near $0$, although $M$ is not a $\mathcal{C}^k$-submanifold there. Its graph representation is nevertheless weakly $\mathcal{C}^k$, anticipating the characterization in Section~\ref{sec:PolyRaby-characterization}.

\begin{example}\label{ex:Counterexample}
Fix $k\geq 2$, and let $\zeta:\R\to [0,1]$ be a $\mathcal{C}^{\infty}$ function which is equal to $1$ on a neighborhood of $0$ and is supported in the open interval $(-1/4,1/4)$. Consider the function $\varphi:\R\to\R$ given by
\[
\varphi(t) := \zeta(t)\left(\frac{t^{k-1}}{(k-1)!} + \frac{t^k}{k!}\right).
\]
Note that this construction entails that $\varphi^{(k-1)}(0) = \varphi^{(k)}(0) = 1$. Let $r_n = 4^{-n}$ and define the curve $\theta:\R\to \ell^2$ given by
\[
\theta(t) = \sum_{n=1}^{\infty} r_n^k \varphi\left(\frac{t-r_n}{r_n}\right) e_n.
\]
Note that the $n$th summand of the series is supported in the interval $I_n = \left( 3r_n/4,5r_n/4 \right)$, and the family $\{I_n\,:\, n\in\N\}$ is pairwise disjoint. Therefore, $\theta$ is well defined, it vanishes in $(-\infty,0]$, and it is $\mathcal{C}^{\infty}$-smooth in $\R\setminus\{0\}$. Figure~\ref{fig:counterexample} illustrates the construction.\hfill$\Diamond$
\end{example}

\begin{figure}[ht]
\centering
\begin{tikzpicture}[x=1cm,y=1cm,>=Latex,line cap=round,line join=round]
  % Main reference points on the t-axis
  \coordinate (O) at (0,0);
  \coordinate (L1) at (5.90,0);
  \coordinate (R1) at (7.05,0);
  \coordinate (C1) at ($(L1)!0.5!(R1)$);
  \coordinate (L2) at (3.55,0);
  \coordinate (R2) at (4.45,0);
  \coordinate (C2) at ($(L2)!0.5!(R2)$);
  \coordinate (L3) at (1.35,0);
  \coordinate (R3) at (2.05,0);
  \coordinate (C3) at ($(L3)!0.5!(R3)$);

  % Directions of the visible coordinates
  \coordinate (u2) at (-0.82,0.57);
  \coordinate (u3) at (-0.74,-0.52);

  % Axes
  \draw[->,thick] (-1.55,0) -- (8.25,0) node[right] {$t$};
  \draw[->,thick] (O) -- (0,3.1) node[above] {$e_1$};
  \draw[->,thick] (O) -- (-2.85,1.98) node[left] {$e_2$};
  \draw[->,thick] (O) -- (-2.20,-1.55) node[left] {$e_3$};
  \fill (O) circle (2pt) node[below] {$0$};
  \node[below] at (-1.2,0) {$t<0$};

  % Baseline pieces of the graph
  \draw[very thick,blue] (-1.30,0) -- (1.02,0);
  \draw[very thick,blue] (1.02,0) -- (L3);
  \draw[very thick,blue] (R3) -- (L2);
  \draw[very thick,blue] (R2) -- (L1);
  \draw[very thick,blue] (R1) -- (7.95,0);

  % Tiny omitted bumps accumulating at 0
  % \draw[dotted,thick]
  %   (0.22,0.00)
  %     .. controls (0.33,0.08) and (0.45,0.10) .. (0.56,0.00)
  %     .. controls (0.63,0.06) and (0.74,0.08) .. (0.82,0.00)
  %     .. controls (0.88,0.05) and (0.95,0.06) .. (1.00,0.00);
  \node[above,blue] at (0.62,0.27) {$\cdots\cdots$};

  % Smooth bump in the e3-direction (smallest)
  \begin{scope}[shift={(1.35,0)},x={(0.70,0)},y={(-0.55,-0.39)}]
    \draw[very thick,blue] plot[smooth] coordinates {
      (0.00,0.00) (0.10,0.02) (0.22,0.13) (0.36,0.35)
      (0.50,0.55) (0.64,0.35) (0.78,0.13) (0.90,0.02) (1.00,0.00)};
  \end{scope}
  %\draw[densely dashed] (C3) -- ++(-0.30,-0.21);
  % \node[left] at (0.95,-0.56) {$\theta_3(t)e_3$};
   \node[above] at ($(C3)+(0,0.08)$) {$I_3$};
  % \node[below] at ($(C3)+(0,-0.38)$) {$I_3$};

  % Smooth bump in the e2-direction (intermediate)
  \begin{scope}[shift={(3.55,0)},x={(0.90,0)},y={(-0.72,0.50)}]
    \draw[very thick,blue] plot[smooth] coordinates {
      (0.00,0.00) (0.10,0.03) (0.22,0.16) (0.36,0.48)
      (0.50,0.95) (0.64,0.48) (0.78,0.16) (0.90,0.03) (1.00,0.00)};
  \end{scope}
  % \draw[densely dashed] (C2) -- ++(-0.68,0.47);
  % \node[left] at (2.52,1.08) {$\theta_2(t)e_2$};
   \node[below] at ($(C2)+(0,-0.10)$) {$I_2$};
  % \node[above] at ($(C2)+(0,0.42)$) {$I_2$};

  % Smooth bump in the e1-direction (largest)
  \begin{scope}[shift={(5.90,0)},x={(1.15,0)},y={(0,1.0)}]
    \draw[very thick,blue] plot[smooth] coordinates {
      (0.00,0.00) (0.08,0.04) (0.18,0.25) (0.31,0.85)
      (0.42,1.45) (0.50,1.78) (0.58,1.45) (0.69,0.85)
      (0.82,0.25) (0.92,0.04) (1.00,0.00)};
  \end{scope}
 % \draw[densely dashed] (C1) -- ++(0,1.78);
 % \node[right] at (6.86,1.95) {$\theta_1(t)e_1$};
  \node[below] at ($(C1)+(0,-0.10)$) {$I_1$};
%  \node[above] at ($(C1)+(0,0.55)$) {$I_1$};
\end{tikzpicture}
\caption{Schematic view of the curve $t\mapsto (t,\theta(t))$ from Example~\ref{ex:Counterexample} in the coordinates $(t,e_1,e_2,e_3)$. On each support interval $I_n$, only the $e_n$-component is active. The visible amplitudes decrease from $e_1$ to $e_2$ to $e_3$, while the omitted higher modes produce analogous smaller bumps accumulating at $t=0$. The figure illustrates the disjoint supports and decreasing amplitudes; the scalar profiles and relative scales are schematic.}
\label{fig:counterexample}
\end{figure}
\begin{proposition}\label{prop:Counterexample-properties} Fix $k\geq 2$ and consider the function $\theta:\R\to \ell^2$ given by Example~\ref{ex:Counterexample}. The following assertions hold:
\begin{enumerate}
    \item $\theta$ is of class $\mathcal{C}^{k-1}$.
    \item $\theta^{(k-1)}$ is not Fr\'echet differentiable at $0$.
    \item $\theta$ is weakly $\mathcal{C}^k$.
    \item The squared distance function $d_M^2$ with $M=\gph(\theta)\subset \R\times\ell^2$ is of class $\mathcal{C}^k$ near $0$.
\end{enumerate}
\end{proposition}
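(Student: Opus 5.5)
Write $\theta=\sum_n\theta_n e_n$ with $\theta_n(t)=r_n^k\varphi((t-r_n)/r_n)$, so that for $j\le k$
\[
\theta_n^{(j)}(t)=r_n^{k-j}\varphi^{(j)}\bigl((t-r_n)/r_n\bigr),\qquad \|\theta_n^{(j)}\|_\infty\le r_n^{k-j}\|\varphi^{(j)}\|_\infty .
\]
Because the supports $I_n$ are pairwise disjoint, at each $t$ at most one coordinate is nonzero. Hence $\|\theta^{(j)}(t)\|=|\theta_n^{(j)}(t)|$ for $t\in I_n$, and $\theta^{(j)}(t)=0$ off $\bigcup_n I_n$.

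For (i), I handle $t\neq0$ directly, since $\theta$ is smooth there. At $0$, for $j\le k-1$ I show inductively that $\theta^{(j)}(0)=0$ and $\theta^{(j)}$ is continuous at $0$. Since $I_n\subset(3r_n/4,5r_n/4)$, for $t\in I_n$ we have $\|\theta^{(j)}(t)\|\lesssim r_n^{k-j}\lesssim |t|^{k-j}$. This gives both $\|\theta^{(j-1)}(t)\|=o(|t|)$, hence differentiability with zero derivative, and continuity, as long as $k-j\ge1$. For (ii), note that $\theta^{(k-1)}(r_n)=r_n\varphi^{(k-1)}(0)e_n=r_ne_n$. So $\theta^{(k-1)}(r_n)/r_n=e_n$, which does not converge in norm, and the only candidate derivative would be $\lim \theta^{(k-1)}(t)/t$. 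The same vectors do converge weakly to $0$, which is what suggests (iii). Also $\theta^{(k)}(r_n)=\varphi^{(k)}(0)e_n=e_n$, so $\|\theta^{(k)}\|$ does not tend to $0$.

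For (iii), fix $y\in\ell^2$ and set $g=\langle y,\theta\rangle=\sum y_n\theta_n$. This function is smooth off $0$, and $g^{(j)}(t)=y_n\theta_n^{(j)}(t)$ on $I_n$. For $j\le k-1$, (i) already gives what is needed. At order $k$, on $I_n$ we have $|g^{(k-1)}(t)|\le |y_n|\, r_n\|\varphi^{(k-1)}\|_\infty\lesssim |y_n||t|$ and $|g^{(k)}(t)|\le |y_n|\|\varphi^{(k)}\|_\infty$. Since $y_n\to0$ and $n\to\infty$ as $t\to0^+$, we get $g^{(k-1)}(t)=o(|t|)$, so $g^{(k)}(0)=0$, and $g^{(k)}(t)\to0$. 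Thus $g\in\mathcal C^k$.

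For (iv), I will use the Poly--Raby coordinates in the reverse direction, through Proposition~\ref{prop:PolyRaby-from-graph} with $Z=\R$. Here $D\theta(z)^*w=\langle\theta'(z),w\rangle$, so
\[
\psi(z,w)=\bigl(z-\langle\theta'(z),w\rangle,\ \theta(z)+w\bigr).
\]
The key point is that $(z,w)\mapsto\langle\theta'(z),w\rangle$ is jointly $\mathcal C^{k-1}$ even though $\theta'$ is only $\mathcal C^{k-2}$ in norm. This function is linear in $w$, and its $z$-derivatives $\langle\theta^{(j+1)}(z),w\rangle$ for $j\le k-1$ are bounded operators in $w$ (using $\|\theta^{(k)}(z)\|\le\|\varphi^{(k)}\|_\infty$). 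The mixed derivative in $w$ is $\theta^{(j+1)}(z)$, which is norm-continuous for $j+1\le k-1$.

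The main obstacle is the top-order pure derivative $\partial_z^{k-1}\langle\theta'(z),w\rangle=\langle\theta^{(k)}(z),w\rangle$. I need it continuous jointly in $(z,w)$, which fails in operator norm at $z=0$. Joint continuity at $(0,w_0)$ still holds, because
\[
|\langle\theta^{(k)}(z),w\rangle|\le \|\theta^{(k)}\|_\infty\|w-w_0\|+|\langle\theta^{(k)}(z),w_0\rangle|,
\]
and the last term tends to $0$ by (iii). I will therefore check $\mathcal C^{k-1}$ in the sense of Fréchet derivatives of this specific polynomial-in-$w$ map via a partial-derivative criterion. This works because the map is affine in $w$, so every higher derivative involving $w$ at least once has order at most $k-2$ in $z$ and is norm-continuous.

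Proposition~\ref{prop:PolyRaby-from-graph} also requires that $P_M$ be well defined and continuous near $0$. I get this from prox-regularity of $M$, which follows because $\theta$ is $\mathcal C^{1,1}$ ($\theta''$ is bounded) and $k\ge2$. With $r=k-1$, the proposition gives $\Psi=\psi^{-1}\in\mathcal C^{k-1}$. Then $P_M(x)=(\pi_Z\Psi(x),\theta(\pi_Z\Psi(x)))$, and
\[
x-P_M(x)=\bigl(-\langle\theta'(z),w\rangle,\,w\bigr)\quad\text{with }(z,w)=\Psi(x),
\]
is a composition of $\mathcal C^{k-1}$ maps. By \eqref{eq:distance-gradient} this gives $\nabla d_M^2\in\mathcal C^{k-1}$, i.e. $d_M^2\in\mathcal C^k$ near $0$.
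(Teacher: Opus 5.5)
Your arguments for (i)--(iii) are the paper's: the estimate $\|\theta^{(j)}(t)\|\lesssim r_n^{k-j}\lesssim |t|^{k-j}$ on $I_n$, the sequence $t=r_n$ with $\theta^{(k-1)}(r_n)=r_ne_n$, and the scalar bounds by $|y_n|$. Your proof of (iv) is correct but takes a genuinely different route.

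The paper works directly with the one-dimensional minimization of $J(t;s,z)=\frac12(t-s)^2+\frac12\|\theta(t)-z\|^2$. It applies the Implicit Function Theorem to the stationarity equation $F(t,s,z)=t-s+\langle\theta'(t),\theta(t)-z\rangle=0$, whose $\mathcal{C}^{k-1}$ regularity comes from Lemma~\ref{lem:joint-adjoint}. It then uses the coercivity bound $|t|\le 2\|(s,z)\|$ to show that every global minimizer equals the implicit solution $q(s,z)$. This yields existence and uniqueness of the nearest point and the formula $P_M=(q,\theta\circ q)$ in one stroke, with no appeal to prox-regularity.

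You instead run the mechanism of Theorem~\ref{thm:weak-characterization}, (ii)$\Rightarrow$(i), on this example. Prox-regularity of the $\mathcal{C}^{1,1}$ graph supplies a single-valued continuous $P_M$ a priori, and Proposition~\ref{prop:PolyRaby-from-graph} with $r=k-1$ inverts $\psi$.

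The trade-off is as follows. The paper's argument is self-contained and gets single-valuedness of $P_M$ from the localization of minimizers, but it exploits that $M$ is a curve with a scalar parameter. Yours imports the external prox-regularity results, but it exhibits (iv) as a special case of the general implication, namely (iii) fed into the theorem.

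Both routes rest on the same key fact: $(t,y)\mapsto\langle\theta'(t),y\rangle$ is jointly $\mathcal{C}^{k-1}$ even though $\theta'$ is only $\mathcal{C}^{k-2}$ in norm. Your direct verification of this is sound. It replaces the Banach--Steinhaus step of Lemma~\ref{lem:joint-adjoint} by the explicit bound $\sup_{t\neq0}\|\theta^{(k)}(t)\|\le\|\varphi^{(k)}\|_\infty$. It also works because $t$ is scalar and every derivative of order at most $k-1$ involving $w$ is one of the norm-continuous maps $\theta^{(j)}$ with $j\le k-1$.

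One small wording point: for $k=2$, the second derivative $\theta''$ exists (in norm) only off $0$. Lipschitz continuity of $\theta'$ still follows from the mean value inequality on each side of $0$ together with continuity of $\theta'$ at $0$, or directly from Corollary~\ref{cor:weak-lipschitz}.
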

\begin{proof}
We prove the assertions sequentially.

\paragraph{(i) $\theta$ is of class $\mathcal{C}^{k-1}$:} For $0\leq j\leq k$, let us define
\[
b_j(t) = \left\{
\begin{array}{cr}
     \displaystyle\sum_{n=1}^{\infty} r_n^{k-j}\varphi^{(j)}\left(\frac{t-r_n}{r_n}\right)e_n&\quad\text{ if }t\neq 0,  \\
     0& \quad\text{ if }t=0.
\end{array}
\right.
\]
We claim that $b_j = \theta^{(j)}$ for every $j\in \{0,\ldots,k-1\}$. It is straightforward to verify that this holds for every $t\neq 0$, and so we only need to prove it for $t=0$.

First note that $b_0=\theta$. For $0\leq j\leq k$, put $C_j:=\|\varphi^{(j)}\|_\infty$. For every $j\in\{0,\ldots,k-1\}$, we have
\[
t\in I_n\implies \|b_j(t)\|\leq C_jr_n^{k-j}
\leq C_j\left(\frac43\right)^{k-j}|t|^{k-j}.
\]
Outside the intervals $I_n$, the function $b_j$ vanishes, so this estimate holds for every $t\in\R$. In particular, $b_j$ is continuous at $0$. Inductively, for $1\leq j\leq k-1$, we obtain
\[
\frac{\|b_{j-1}(t)-b_{j-1}(0)-b_j(0)t\|}{|t|}
\leq C_{j-1}\left(\frac43\right)^{k-j+1}|t|^{k-j}
\xrightarrow{t\to0}0.
\]
This proves our claim and yields $\theta\in\mathcal{C}^{k-1}(\R,\ell^2)$.

\paragraph{(ii) $\theta^{(k-1)}$ is not Fr\'echet differentiable at $0$:} Consider the sequence $t_n=r_n\to 0$. Recalling that $\varphi^{(k-1)}(0) = 1$, we have that
\[
\frac{\|\theta^{(k-1)}(r_n) - \theta^{(k-1)}(0)\|}{r_n} = \|e_n\| = 1.
\]
Since $\theta^{(k-1)}$ vanishes for $t\leq0$, any derivative at $0$ would have to be zero, contradicting this equality. Thus $\theta^{(k-1)}$ is not Fr\'echet differentiable at $0$. Furthermore, $M=\gph(\theta)$ is closed and has tangent space $\R\times\{0\}$ at $0$. If $M$ were a $\mathcal{C}^k$-submanifold there, its graph representation over this tangent space would be of class $\mathcal{C}^k$ by \cite[Theorem~2.1]{Salas-Thibault:2019}. By uniqueness of the local graph representation over the tangent space $\R\times\{0\}$, this would imply $\theta\in\mathcal{C}^k$ near $0$, a contradiction.

\paragraph{(iii) $\theta$ is weakly $\mathcal{C}^k$:} Fix $\xi\in\ell^2$. By (i), it remains to prove that $t\mapsto\langle\theta^{(k-1)}(t),\xi\rangle$ is differentiable at $0$ and that its derivative is continuous there. For $t\in I_n$, we have
\[
\frac{|\langle\theta^{(k-1)}(t)-\theta^{(k-1)}(0),\xi\rangle|}{|t|}
\leq C_{k-1}\frac{r_n}{|t|}|\xi_n|
\leq\frac43 C_{k-1}|\xi_n|.
\]
If $t\to0$ through these intervals, their indices tend to infinity, and $\xi_n\to0$. Outside their union, the numerator is zero. Thus the scalar derivative at $0$ exists and equals zero. Moreover,
\[
t\in I_n\implies |\langle b_k(t),\xi\rangle|\leq C_k|\xi_n|\longrightarrow0
\qquad\text{as }t\to0,
\]
and $b_k(t)=0$ outside the intervals. Hence the $k$th derivative of $\langle\theta(\cdot),\xi\rangle$ is continuous at $0$, and $\theta$ is weakly $\mathcal{C}^k$.

\paragraph{(iv) The squared distance function $d_M^2$ is of class $\mathcal{C}^k$ near $0$:}

For $(s,z)\in \R\times \ell^2$, consider the function
\[
t\mapsto J(t;s,z) :=  \frac{1}{2}(t-s)^2 + \frac{1}{2}\|\theta(t) - z\|^2,
\]
which is a coercive continuous function, and therefore it attains its minimum. Note that $\frac{1}{2}d_M^2(s,z) = \min_{t\in\R} J(t;s,z)$.

The derivative of $J$ with respect to $t$ is given by
\[
\frac{d}{dt}J(t;s,z) = F(t,s,z):= t-s + \langle \theta'(t), \theta(t)-z\rangle.
\]
By Lemma~\ref{lem:joint-adjoint} below, the mapping $(t,y)\mapsto\langle\theta'(t),y\rangle$ is jointly of class $\mathcal{C}^{k-1}$. Composing with $(t,s,z)\mapsto(t,\theta(t)-z)$, we deduce that $F$ is of class $\mathcal{C}^{k-1}$. Noting that $F(0,0,0) = 0$ and $D_tF(0,0,0) = 1$, we can apply the Implicit Function Theorem to show that there is $\delta>0$ small enough, an open neighborhood $O$ of $(0,0)\in\R\times \ell^2$ and a mapping $q\in\mathcal{C}^{k-1}(O,\R)$ such that for every $t\in(-\delta,\delta)$ and every $(s,z)\in O$ one has 
\[
F(t,s,z) = 0 \iff t = q(s,z).
\]

Now fix $(s,z)\in O$ and let $t$ be a global minimizer of $J(\cdot;s,z)$. We have that
\[
(t-s)^2 \leq 2J(t;s,z)\leq 2J(0;s,z) = s^2+\|z\|^2 = \|(s,z)\|^2.
\]
Thus, $|t| \leq |s| + \|(s,z)\| \leq 2\|(s,z)\|$. By shrinking $O$ if necessary, we can assume that $\|(s,z)\| <\delta/2$ and so $|t|<\delta$. Since a global minimizer satisfies $F(t,s,z)=0$, it follows that $t=q(s,z)$. This yields that $P_M$ is well-defined over $O$ and that
\[
\forall (s,z)\in O,\quad P_M(s,z) = (q(s,z),\theta\circ q(s,z)).
\]
Then, $P_M$ is of class $\mathcal{C}^{k-1}$ on $O$, and \eqref{eq:distance-gradient} shows that $d_M^2$ is of class $\mathcal{C}^k$ on $O$. This completes the proof.
\end{proof}

% ===============================
% ===============================
% ===============================
% ===============================

\section{Characterization of weakly \texorpdfstring{$\mathcal{C}^k$}{Ck}-submanifolds}\label{sec:PolyRaby-characterization}

Example~\ref{ex:Counterexample} is not an isolated phenomenon. Its properties describe exactly the regularity characterized by smoothness of the squared distance function. We now establish both implications in a single theorem, using the diffeomorphism of Poly and Raby recalled in Section~\ref{sec:PolyRaby-coordinates}. Before we can prove this equivalence, we need some regularity properties of weakly $\mathcal{C}^k$ functions.

\begin{lemma}\label{lem:joint-adjoint}
Let $Z,Y$ be real Hilbert spaces, let $V\subset Z$ be open, and let $k\geq2$. If $\theta:V\to Y$ is weakly $\mathcal{C}^k$, then the mapping
\[
G:V\times Y\to Z,\qquad G(z_1,z_2):=D\theta(z_1)^*z_2,
\]
is jointly of class $\mathcal{C}^{k-1}$.
\end{lemma}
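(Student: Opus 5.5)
The plan is to reduce everything to the scalar functions $f_{y}:=\langle\theta(\cdot),y\rangle$, which are of class $\mathcal{C}^k$ on $V$ for each fixed $y\in Y$. The key identity is
\[
\langle G(z_1,z_2),h\rangle=\langle D\theta(z_1)h,z_2\rangle=Df_{z_2}(z_1)h,\qquad h\in Z .
\]
Thus $G(z_1,z_2)=\nabla f_{z_2}(z_1)$, and $G$ is linear in $z_2$. I will show that all partial derivatives of $G$ up to order $k-1$ exist and are jointly norm-continuous. I will then invoke the standard fact that continuous partial derivatives yield Fréchet differentiability, and apply it inductively to conclude $G\in\mathcal{C}^{k-1}$ jointly.

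\textbf{Step 1 (the $z_2$-directions are harmless).} Since $\theta$ is weakly $\mathcal{C}^k$, it is of class $\mathcal{C}^{k-1}$ by \cite{Gutierrez-Llavona:1993}. Because $G$ is linear in $z_2$, we have $D_2G(z_1,z_2)=D\theta(z_1)^*$, and $D_2^2G=0$. Every mixed derivative containing one $z_2$-direction and $j$ $z_1$-directions, with $j\le k-2$, is given by
\[
(h_1,\dots,h_j,\eta)\mapsto \bigl(D^{j+1}\theta(z_1)[h_1,\dots,h_j,\cdot\,]\bigr)^*\eta .
\]
This depends only on $z_1$ and is norm-continuous, because $D^{j+1}\theta$ is continuous for $j+1\le k-1$. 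Hence the only delicate object is the pure derivative $D_1^{k-1}G(z_1,z_2)$, which is naturally identified with the $k$-linear form $D^kf_{z_2}(z_1)$. The lower pure derivatives $D_1^{j}G(z_1,z_2)$, for $j\le k-2$, correspond to $D^{j+1}\theta(z_1)^*z_2$ and are jointly continuous for the same reason.

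\textbf{Step 2 (uniform local bound — the main obstacle).} The main difficulty is to show that, near each $a\in V$,
\[
\sup\bigl\{\|D^kf_{y}(z_1)\|:\ \|z_1-a\|<\rho,\ \|y\|\le1\bigr\}=:K<\infty
\]
for some $\rho>0$. Pointwise in $y$ this bound holds on a $y$-dependent ball, by continuity of $D^kf_y$. To make it uniform I will use a Baire argument. Define
\[
F_n=\Bigl\{y\in Y:\ \|D^kf_y(z_1)\|\le n\ \text{for all } z_1\in B(a,1/n)\Bigr\}.
\]
Each $F_n$ is closed: for fixed $z_1$ and fixed directions, $y\mapsto D^kf_y(z_1)[h_1,\dots,h_k]$ is a pointwise limit of difference quotients of the continuous linear maps $y\mapsto\langle D^{k-1}\theta(\cdot)[h_1,\dots,h_{k-1}],y\rangle$. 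In particular, this map is linear in $y$. The sets $F_n$ cover $Y$, so by Baire some $F_n$ contains a ball $B(y_0,\varepsilon)$. Linearity in $y$ then gives $\|D^kf_y(z_1)\|\le 2n\|y\|/\varepsilon$ for all $z_1\in B(a,1/n)$.

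\textbf{Step 3 (joint continuity and conclusion).} Using linearity and Step 2, for $(z_1,z_2)$ near $(a,b)$ we estimate
\[
\|D^kf_{z_2}(z_1)-D^kf_{b}(a)\|\le K\|z_2-b\|+\|D^kf_b(z_1)-D^kf_b(a)\|.
\]
The right-hand side tends to $0$, since $f_b\in\mathcal{C}^k$. Hence $D_1^{k-1}G$ is jointly norm-continuous. Moreover, $D_1^{k-1}G$ is genuinely the Fréchet derivative of $D_1^{k-2}G$ in $z_1$ for fixed $z_2$. Indeed, $D_1^{k-2}G(\cdot,z_2)$ corresponds to $D^{k-1}f_{z_2}$, which is Fréchet differentiable because $f_{z_2}\in\mathcal{C}^k$.

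Combining Steps 1–3, every partial derivative of $G$ of total order at most $k-1$ exists and is jointly continuous. An induction on the order then shows $G\in\mathcal{C}^{k-1}(V\times Y,Z)$. At each stage the induction uses the classical criterion that a map with jointly continuous partial derivatives on a product of Banach spaces is $\mathcal{C}^1$, with total derivative equal to the sum of the partials.
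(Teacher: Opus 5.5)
Your proof is correct and follows essentially the paper's route. You reduce to the scalarizations $f_y$, prove joint norm-continuity of $(z,y)\mapsto D^kf_y(z)$ through the uniform boundedness principle, and conclude with the continuous-partial-derivatives criterion, where the $z_2$-directions only involve $D^{j}\theta$ with $j\le k-1$. The one variation is in how the uniform bound is obtained. You run a single Baire argument that gives a bound on a whole neighborhood of $a$; the paper only extracts that local bound afterwards, in Corollary~\ref{cor:weak-lipschitz}. The paper instead applies Banach--Steinhaus twice: first to get continuity of $y\mapsto D^k\theta_y(z)$, and then along sequences $z_n\to z$. That first fact is used implicitly when you assert that $F_n$ is closed, since a pointwise limit of continuous linear functionals is continuous precisely by Banach--Steinhaus (or, alternatively, by a mean-value bound on the difference quotients).
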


\begin{proof}
Since $\theta\in\mathcal{C}^{k-1}(V,Y)$, we already have $G\in\mathcal{C}^{k-2}(V\times Y,Z)$. 
For $y\in Y$, write $\theta_y(z):=\langle\theta(z),y\rangle$, which belongs to $\mathcal{C}^k(V,\R)$.

We will first show that the operator $(y,z)\mapsto D^k\theta_y(z)$ is jointly continuous. Note that for every given $z\in V$, the map $B(z):y\mapsto D^k\theta_y(z)$ is linear from $Y$ to $\mathcal{L}^k(Z;\R)$. We need to show that $B(z)$ is also continuous on $y$, for ervery given $z\in V$. Fix $z\in V$ and $h_1,\ldots,h_k\in Z$. For nonzero $t_n\to0$ with $z+t_nh_1\in V$, the continuous linear functionals on $Y$ given by
\[
\ell_n: y\longmapsto\left\langle
\frac{D^{k-1}\theta(z+t_nh_1)-D^{k-1}\theta(z)}{t_n}
(h_2,\ldots,h_k),y\right\rangle
\]
converge pointwise to
\[
\ell_{z,h_1,\ldots,h_k}(y):=D^k\theta_y(z)(h_1,\ldots,h_k).
\]
This yields that $\sup_n |\ell_n(y)|<\infty$ and so, as a direct consequence of the Banach--Steinhaus theorem, each $\ell_{z,h_1,\ldots,h_k}$ is continuous (see, e.g., \cite[Theorem~2.2 and Corollary~2.3]{Brezis:2011}). Moreover, for every $y\in Y$,
\[
\sup_{\|h_i\|\leq1}|\ell_{z,h_1,\ldots,h_k}(y)|=\|D^k\theta_y(z)\|<\infty.
\]
The Banach--Steinhaus theorem, now applied to the family of continuous linear functionals $\{\ell_{z,h_1,\ldots,h_k}:h_i\in Z,\ \|h_i\|\leq1,\ i=1,\ldots,k\}$, yields a constant $C_z$ such that
\begin{equation}\label{eq:scalar-last-bound}
\|D^k\theta_y(z)\|\leq C_z\|y\|,\qquad y\in Y.
\end{equation}
This proves that $B(z):y\mapsto D^k\theta_y(z)$ is continuous for every fixed $z\in V$. Also, since $\theta$ is weakly $\mathcal{C}^k$,
for every fixed $y$ the mapping $z\mapsto B(z)(y)=D^k\theta_y(z)$ is continuous and so $\sup_n \|B(z_n)y\|<+\infty$. Thus, for any sequence $z_n\to z$,  Banach--Steinhaus theorem applied to $\{B(z_n)\mid n\in\N\}$ gives $\sup_n\|B(z_n)\|<\infty$. Consequently, if also $y_n\to y$, then
\[
\|B(z_n)y_n-B(z)y\|
\leq \left(\sup_n\|B(z_n)\|\right)\|y_n-y\|
+\|(B(z_n)-B(z))y\|\longrightarrow0.
\]
Hence $(z,y)\mapsto B(z)(y) = D^k\theta_y(z)$ is jointly continuous as claimed.

Now consider $H(z,y):=D_1^{k-2}G(z,y)\in\mathcal{L}^{k-2}(Z;Z)$, with $H=G$ when $k=2$. Its partial Fr\'echet derivatives therefore satisfy
\begin{align*}
\left\langle\bigl(D_1H(z,y)h_0\bigr)(h_1,\ldots,h_{k-2}),h_{k-1}\right\rangle
&=D^k\theta_y(z)(h_0,h_1,\ldots,h_{k-1}),\\
\left\langle\bigl(D_2H(z,y)\eta\bigr)(h_1,\ldots,h_{k-2}),h_{k-1}\right\rangle
&=\left\langle D^{k-1}\theta(z)(h_1,\ldots,h_{k-1}),\eta\right\rangle.
\end{align*}

The first partial derivative is jointly continuous in operator norm by the joint continuity of $(z,y)\mapsto D^k\theta_y(z)$. The second is jointly continuous in operator norm because $D^{k-1}\theta$ is continuous. Thus $H$ is of class $\mathcal{C}^1$ by the continuous partial derivative criterion.

When $k=2$, this proves the result. When $k>2$, linearity of $G$ in its second variable gives that for $h=(h_1,\ldots,h_{k-2})$ and $\eta_1,\ldots\eta_{k-2}$ one has
\begin{align*}
D^{k-2}G(z,y)\bigl((h_1,\eta_1),\ldots,(h_{k-2},\eta_{k-2})\bigr)=H(z,y)h
+\sum_{i=1}^{k-2} D_1^{k-3}G(z,\eta_i) h_{-i},
\end{align*}
where $h_{-i}$ is the vector $h$ after removing the $i$th coordinate. The coefficients of the terms in the sum are obtained from $D^{k-2}\theta$, which is of class $\mathcal{C}^1$. Together with the $\mathcal{C}^1$ regularity of $H$, this shows that $D^{k-2}G$ is of class $\mathcal{C}^1$, and finishes the proof.
\end{proof}

The previous lemma has the following direct consequence: weakly $\mathcal{C}^k$ functions are not only of class $\mathcal{C}^{k-1}$, as proved in \cite{Gutierrez-Llavona:1993}, but in fact of class $\mathcal{C}^{k-1,1}$.

\begin{corollary}\label{cor:weak-lipschitz}
Under the assumptions of Lemma~\ref{lem:joint-adjoint}, $D^{k-1}\theta$ is locally Lipschitz. In particular, $\theta$ is of class $\mathcal{C}^{1,1}$ when $k\geq2$.
\end{corollary}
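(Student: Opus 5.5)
The plan is to read off local Lipschitz continuity of $D^{k-1}\theta$ from the joint $\mathcal{C}^{k-1}$ regularity of $G(z_1,z_2)=D\theta(z_1)^*z_2$ given by Lemma~\ref{lem:joint-adjoint}. The point is that $D^{k-1}\theta$ is recovered from $D_1^{k-2}G$ by duality:
\[
\bigl\langle D^{k-1}\theta(z)(h_1,\ldots,h_{k-1}),y\bigr\rangle=\bigl\langle (D_1^{k-2}G(z,y))(h_1,\ldots,h_{k-2}),h_{k-1}\bigr\rangle .
\]
For $k=2$ we read $D_1^{0}G=G$. So it suffices to control increments of $H:=D_1^{k-2}G$ in $z$, uniformly over $\|y\|\le1$.

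Fix $\bar z\in V$. Since $H$ is of class $\mathcal{C}^1$ jointly (this is exactly what the proof of Lemma~\ref{lem:joint-adjoint} establishes), $D_1H$ is continuous at $(\bar z,0)$. Hence there is $\rho>0$ such that $\|D_1H(z,y)\|\le\|D_1H(\bar z,0)\|+1=:K$ for all $z\in B(\bar z,\rho)\subset V$ and $\|y\|\le\rho$.

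Now use linearity of $H(z,\cdot)$, and hence of $D_1H(z,\cdot)$, in $y$. This gives $\|D_1H(z,y)\|\le (K/\rho)\|y\|$ for all $z\in B(\bar z,\rho)$ and all $y\in Y$. Note that $D_1H(\bar z,0)=0$, so actually $K=1$ suffices.

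By the mean value inequality on the convex ball, for $z,z'\in B(\bar z,\rho)$ and $\|y\|\le1$ we get
\[
\|H(z,y)-H(z',y)\|\le (K/\rho)\|z-z'\|.
\]
Taking the supremum over $\|y\|\le1$ and $\|h_i\|\le1$ in the duality identity yields $\|D^{k-1}\theta(z)-D^{k-1}\theta(z')\|\le (K/\rho)\|z-z'\|$.

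The only delicate point is the uniformity in $y$. Joint continuity alone would only give a bound on bounded sets near a point. It is the homogeneity in $y$, combined with continuity at $y=0$, that upgrades this to a bound uniform over the whole unit ball of $Y$.

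For the last claim: when $k=2$ this says $D\theta$ is locally Lipschitz, i.e.\ $\theta\in\mathcal{C}^{1,1}$. When $k>2$, $\theta\in\mathcal{C}^{k-1}$ with $k-1\ge2$, so $D\theta$ is $\mathcal{C}^1$ and therefore locally Lipschitz by the mean value inequality.
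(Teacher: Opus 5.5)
Your proof is correct and follows the same route as the paper. Both arguments bound the top $z$-derivative $D_1H(z,y)$ (by the identities in the proof of Lemma~\ref{lem:joint-adjoint}, this is the paper's $B(z)y=D^k\theta_y(z)$) uniformly for $z$ near $\bar z$ and $\|y\|\le1$, then apply the mean value inequality and take the supremum over $y$. The only difference is how the local bound is obtained: you use continuity of $D_1H$ at $(\bar z,0)$ together with homogeneity in $y$, which needs only the statement of Lemma~\ref{lem:joint-adjoint} (indeed $D_1^{k-2}G\in\mathcal{C}^1$ follows directly from $G\in\mathcal{C}^{k-1}$), whereas the paper re-invokes the Banach--Steinhaus sequence argument from that lemma's proof.
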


\begin{proof}
Denote again $\theta_y(z) = \langle \theta(z),y\rangle$ and $B(z):y\mapsto D^{k}\theta_y(z)$. The preceding proof shows that $B(z)$ is locally bounded: if not, there would exist a convergent sequence $(z_n)$ with $\sup_n\|B(z_n)\| = +\infty$, which is not possible. Choose then a convex neighborhood $W\subset V$ on which $\|B(z)\|\leq C$. For $z,z'\in W$, the mean value estimate for the scalarizations $\theta_y$ gives
\begin{align*}
\|D^{k-1}\theta(z')-D^{k-1}\theta(z)\|
&=\sup_{\|y\|\leq1}\|D^{k-1}\theta_y(z')-D^{k-1}\theta_y(z)\|\leq C\|z'-z\|.
\end{align*}
This completes the proof.
\end{proof}

We are now ready to establish the first generalization of Poly and Raby's theorem in infinite dimensions.

\begin{theorem}\label{thm:weak-characterization}
Let $M\subset\mathcal{H}$ be a closed set, $k\geq2$, and $\bar{x}\in M$. The following assertions are equivalent:
\begin{enumerate}
\item $d_M^2$ is of class $\mathcal{C}^k$ near $\bar{x}$.
\item $M$ is a weakly $\mathcal{C}^k$-submanifold near $\bar{x}$.
\end{enumerate}
\end{theorem}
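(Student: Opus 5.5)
We translate $\bar{x}$ to $0$ and prove the two implications separately, using Propositions~\ref{prop:PolyRaby-coordinates} and~\ref{prop:PolyRaby-from-graph} as the common geometric basis.

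\emph{(i)$\Rightarrow$(ii).} Proposition~\ref{prop:PolyRaby-coordinates} provides a closed subspace $Z$, a $\mathcal{C}^{k-1}$ graph representative $\theta:V\to Z^\perp$ with $\theta(0)=0$ and $D\theta(0)=0$, and identity \eqref{eq:PolyRaby-adjoint}. It remains to show that $\theta$ is weakly $\mathcal{C}^k$, that is, $\theta_w(z):=\langle\theta(z),w\rangle\in\mathcal{C}^k(V,\R)$ for every $w\in Z^\perp$. Since $\langle\theta(z),w\rangle$ is linear in $w$, it suffices to treat small $w$, say $\|w\|<\delta$.

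\begin{enumerate}
\item Observe that $D\theta_w(z)=D\theta(z)^*w$, which is a $Z$-valued map of $z$.
\item By \eqref{eq:PolyRaby-adjoint},
\[
D\theta_w(z)=-\tfrac12\pi_Z\nabla d_M^2\bigl(\Psi^{-1}(z,w)\bigr).
\]
Here $\nabla d_M^2$ is $\mathcal{C}^{k-1}$ near $0$, and $\Psi^{-1}$ is a $\mathcal{C}^{k-1}$-diffeomorphism, so the right-hand side is $\mathcal{C}^{k-1}$ in $z$ for fixed $w$.
\item Hence $\theta_w$ is $\mathcal{C}^k$ on $V$ (possibly after shrinking $V$ uniformly in $w$), which is exactly weak $\mathcal{C}^k$ regularity.
\end{enumerate}

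This direction is essentially formal; the only care needed is that $V$ does not depend on $w$, which holds because the neighborhood in Proposition~\ref{prop:PolyRaby-coordinates} is a product $V\times B_{Z^\perp}(0,\delta)$.

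\emph{(ii)$\Rightarrow$(i).} Let $\theta:V\to Z^\perp$ be the weakly $\mathcal{C}^k$ representative. The plan is to apply Proposition~\ref{prop:PolyRaby-from-graph} with $r=k-1$, which gives $P_M=(\pi_Z\Psi,\theta\circ\pi_Z\Psi)\in\mathcal{C}^{k-1}$; then \eqref{eq:distance-gradient} yields $d_M^2\in\mathcal{C}^k$. Two hypotheses of that proposition must be verified.

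\begin{enumerate}
\item \emph{$\psi$ is of class $\mathcal{C}^{k-1}$.} We have $\psi(z,w)=(z-G(z,w),\theta(z)+w)$ with $G(z,w)=D\theta(z)^*w$. By Lemma~\ref{lem:joint-adjoint}, $G$ is jointly $\mathcal{C}^{k-1}$, and $\theta\in\mathcal{C}^{k-1}$, so $\psi\in\mathcal{C}^{k-1}$.
\item \emph{$P_M$ is well defined and continuous near $0$.} This is the main obstacle. By Corollary~\ref{cor:weak-lipschitz}, $\theta$ is $\mathcal{C}^{1,1}$, so $M$ is locally the graph of a $\mathcal{C}^{1,1}$ map. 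Such a set is prox-regular at $0$; this is known from \cite{Salas-Thibault:2019,Poliquin-Rockafellar-Thibault:2000}, where $\mathcal{C}^{1,1}$-submanifolds have $\mathcal{C}^{1,1}$ squared distance. By \cite[Theorem~2.2]{Correa-Salas-Thibault:2018}, $P_M$ is then single-valued and Lipschitz near $0$.
\end{enumerate}

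If one prefers a direct argument for the second step, mimic part (iv) of Proposition~\ref{prop:Counterexample-properties}. Any nearest point of $x$ near $0$ has the form $(z,\theta(z))$ with $z$ small, by comparison with the point $0\in M$. First-order optimality then gives $x=\psi(z,w)$ for some $w$. Local injectivity of the $\mathcal{C}^1$-diffeomorphism $\psi$, obtained from the first step and the Inverse Function Theorem, forces the pair $(z,w)$ to be unique. Hence $\operatorname{Proj}_M(x)$ is a singleton depending continuously on $x$, via the inverse of $\psi$.

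It must also be checked that closedness of $M$ keeps nearest points local. Points of $M$ outside the graph neighborhood are at distance bounded below, while $d_M(x)\le\|x\|$ is small, so for $x$ near $0$ every nearest point lies in the graph neighborhood.

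Once $P_M$ is well defined and continuous, Proposition~\ref{prop:PolyRaby-from-graph} applies with $r=k-1$ and completes the proof.
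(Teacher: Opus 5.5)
Your proof is correct and follows the paper's own route. For (i)$\Rightarrow$(ii) you use identity \eqref{eq:PolyRaby-adjoint} together with linearity in $w$. For (ii)$\Rightarrow$(i) you use Corollary~\ref{cor:weak-lipschitz} to get $\mathcal{C}^{1,1}$, hence prox-regularity and a well-defined continuous $P_M$; then Lemma~\ref{lem:joint-adjoint} gives $\psi\in\mathcal{C}^{k-1}$, and Proposition~\ref{prop:PolyRaby-from-graph} with $r=k-1$ finishes. One caution concerns your optional ``direct argument'': it shows that $x$ has at most one nearest point near $0$, but never that one exists. In infinite dimensions $\operatorname{Proj}_M(x)$ can be empty, and the coercivity argument of Proposition~\ref{prop:Counterexample-properties}(iv) relies on the parameter space being $\R$. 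So the prox-regularity route is what actually carries that step.
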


\begin{proof}
Without loss of generality, assume that $\bar{x}=0$. Both implications rely on the same coordinates $\Psi$ and $\psi=\Psi^{-1}$ from Section~\ref{sec:PolyRaby-coordinates}.

Suppose first that (i) holds. Let $\Psi$, $\theta$, $V$, and $\delta$ be given by Proposition~\ref{prop:PolyRaby-coordinates}. For each fixed $w\in B_{Z^\perp}(0,\delta)$, identity~\eqref{eq:PolyRaby-adjoint} shows that
\[
z\longmapsto D\theta(z)^*w
\]
is of class $\mathcal{C}^{k-1}$ on $V$. By linearity in $w$, the same is true for every $w\in Z^\perp$: for $w\neq0$, use
\[
D\theta(z)^*w=\frac{2\|w\|}{\delta}
D\theta(z)^*\left(\frac{\delta w}{2\|w\|}\right).
\]
Since this mapping is the gradient of $z\mapsto\langle\theta(z),w\rangle$, every scalarization of $\theta$ is of class $\mathcal{C}^k$. Thus $\theta$ is weakly $\mathcal{C}^k$, proving (ii).

Conversely, suppose that (ii) holds, and let $\theta:V\subset Z\to Z^\perp$ be a weakly $\mathcal{C}^k$ graph representation, with $\theta(0)=0$ and $D\theta(0)=0$. Corollary~\ref{cor:weak-lipschitz} gives $\theta\in\mathcal{C}^{1,1}$, which makes $M$ prox-regular at $0$; see, e.g., \cite[Example~64.2]{Colombo-Thibault:2010}. 

By Lemma~\ref{lem:joint-adjoint}, the normal-coordinate mapping $\psi$ in \eqref{eq:PolyRaby-psi} is of class $\mathcal{C}^{k-1}$. Proposition~\ref{prop:PolyRaby-from-graph}, applied with $r=k-1$, identifies its local inverse with the same Poly--Raby mapping $\Psi$ and gives \eqref{eq:PolyRaby-projection-from-graph}. Since $\Psi$ and $\theta$ are of class $\mathcal{C}^{k-1}$, this identity yields $P_M\in\mathcal{C}^{k-1}$ near $0$. Finally, \eqref{eq:distance-gradient} gives (i), finishing the proof.
\end{proof}

% ===============================
% ===============================
% ===============================
% ===============================

\section{Characterization of \texorpdfstring{$\mathcal{C}^k$}{Ck}-submanifolds in Hilbert spaces}\label{sec:Pure-submanifolds}

Theorem~\ref{thm:weak-characterization} gives the first part of the splitting. We now identify the additional condition that recovers strong $\mathcal{C}^k$ regularity. By \eqref{eq:PolyRaby-adjoint}, the issue is to pass from joint regularity of $(z,w)\mapsto D\theta(z)^*w$ to regularity of the operator-valued mapping $z\mapsto D\theta(z)^*$. In \cite[Lemma~5.3]{Salas-Thibault:2019}, local uniform equicontinuity in the first variable, uniformly over a bounded neighborhood of zero in the second variable, of the last derivative was used for this purpose. This is the key observation that leads to \eqref{eq:characs-SalasThibault}. Here, equicontinuity at each point is enough, and is also necessary.

We use the notation $D_1^pG$ fixed in \eqref{eq:pure-first-partial-definition}: only the first variable is differentiated, and all arguments of the resulting multilinear mapping lie in the first-variable space.

\begin{lemma}\label{lem:pointwise-equicontinuity}
Let $X,Y,E$ be Banach spaces, let $U\subset X$ be open, and let $p\geq1$ be an integer. Consider a mapping $T:U\to\mathcal{L}(Y,E)$ such that
\[
G:U\times Y\to E,\qquad G(u,y):=T(u)y,
\]
is jointly of class $\mathcal{C}^p$. Then the following assertions are equivalent:
\begin{enumerate}
    \item $T\in\mathcal{C}^p(U,\mathcal{L}(Y,E))$.
    \item For every $u_0\in U$,
    \begin{equation}\label{eq:pointwise-equicontinuity}
    \lim_{u\to u_0}\sup_{\|y\|\leq1}
    \bigl\|D_1^pG(u,y)-D_1^pG(u_0,y)\bigr\|_{\mathcal{L}^p(X;E)}=0.
    \end{equation}
\end{enumerate}
Note that condition (ii) means that the family $\{D_1^pG(\cdot,y):\|y\|\leq1\}$ is equicontinuous at each $u_0\in U$.
\end{lemma}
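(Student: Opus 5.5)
We prove the two implications separately. The key observation is that the supremum in \eqref{eq:pointwise-equicontinuity} is an operator norm. Consider the natural isometric identification
\[
\mathcal{L}^p\bigl(X;\mathcal{L}(Y,E)\bigr)\;\cong\;\mathcal{L}\bigl(Y;\mathcal{L}^p(X;E)\bigr),\qquad
A\longmapsto\bigl[y\mapsto (h_1,\ldots,h_p)\mapsto A(h_1,\ldots,h_p)y\bigr].
\]
Under it, whenever $D^pT(u)$ exists we expect $D_1^pG(u,y)=D^pT(u)(\cdot)y$.

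\textbf{(i)$\Rightarrow$(ii).} Assume $T\in\mathcal{C}^p$. The evaluation map $(S,y)\mapsto Sy$ is continuous bilinear, so by the chain rule
\[
D_1^jG(u,y)(h_1,\ldots,h_j)=\bigl(D^jT(u)(h_1,\ldots,h_j)\bigr)y,\qquad j\leq p.
\]
Therefore
\[
\sup_{\|y\|\leq1}\bigl\|D_1^pG(u,y)-D_1^pG(u_0,y)\bigr\|=\|D^pT(u)-D^pT(u_0)\|,
\]
and the right-hand side tends to $0$ as $u\to u_0$ by continuity of $D^pT$.

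\textbf{(ii)$\Rightarrow$(i).} We argue by induction on the order $j\in\{0,\ldots,p\}$. For each $j$ and $u\in U$, define
\[
T_j(u)\in\mathcal{L}^j\bigl(X;\mathcal{L}(Y,E)\bigr),\qquad T_j(u)(h_1,\ldots,h_j)\,y:=D_1^jG(u,y)(h_1,\ldots,h_j).
\]
This is well defined. Since $G$ is jointly $\mathcal{C}^p$, the map $y\mapsto D_1^jG(u,y)$ is linear, because $G$ is linear in $y$ and differentiation in $u$ preserves linearity. It is also continuous, because $D_1^jG$ is jointly continuous. Hence $T_j(u)$ is bounded.

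The main step is to show that $T_j$ is Fréchet differentiable in operator norm with $DT_j=T_{j+1}$, for $j<p$. By the mean value inequality applied to $u\mapsto D_1^jG(u,y)$ on a small ball around $u_0$,
\[
\bigl\|D_1^jG(u_0+h,y)-D_1^jG(u_0,y)-D_1^{j+1}G(u_0,y)h\bigr\|\leq\|h\|\sup_{s\in[0,1]}\bigl\|D_1^{j+1}G(u_0+sh,y)-D_1^{j+1}G(u_0,y)\bigr\|.
\]
So we need the remainder to be $o(\|h\|)$ uniformly over $\|y\|\leq1$. This requires equicontinuity of $D_1^{j+1}G(\cdot,y)$ at $u_0$, uniformly in $y$, but hypothesis (ii) only provides it at order $p$.

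I expect this to be the main obstacle: propagating equicontinuity from order $p$ down to lower orders. The plan is to argue top-down.

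First, (ii) says precisely that $T_p$ is continuous in operator norm. Next, fix $j<p$ and $u$ near $u_0$ in a convex ball $B$ on which the quantity in (ii) is bounded; continuity of $T_p$ at points of $B$ gives this bound near $u_0$. Using the mean value inequality with the bound from (ii), we get
\[
\sup_{\|y\|\leq1}\bigl\|D_1^{p-1}G(u,y)-D_1^{p-1}G(u_0,y)\bigr\|\leq C\|u-u_0\|.
\]
Repeating this downward gives equicontinuity at every order $j\leq p$. Each $T_j$ is therefore operator-norm continuous, and the displayed estimate then yields $DT_j=T_{j+1}$.

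Since $T_0=T$, induction gives $T\in\mathcal{C}^p$ with $D^pT=T_p$ continuous. Local boundedness of $\sup_{\|y\|\leq1}\|D_1^pG(u,y)\|$ near $u_0$ follows directly from (ii) together with finiteness at $u_0$, which holds because $T_p(u_0)$ is bounded.
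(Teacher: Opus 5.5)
Your proof is correct. Its top-order step matches the paper's. You identify $\sup_{\|y\|\leq1}\|D_1^pG(u,y)-D_1^pG(u_0,y)\|$ with $\|T_p(u)-T_p(u_0)\|$ through the isometry $\mathcal{L}^p(X;\mathcal{L}(Y,E))\cong\mathcal{L}(Y;\mathcal{L}^p(X;E))$. You then use the mean-value remainder estimate, uniformly over $\|y\|\leq1$, to differentiate $D^{p-1}T$ with derivative $T_p$.

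The difference is in how you reach the lower orders. What you flag as the main obstacle, propagating uniform-in-$y$ control down to orders $j<p$, the paper dispatches in one line. Since $G$ is linear in $y$, $T(u)=D_2G(u,0)$. Because $G$ is jointly $\mathcal{C}^p$, $D_2G$ is $\mathcal{C}^{p-1}$ into $\mathcal{L}(Y,E)$ in operator norm. Hence $T\in\mathcal{C}^{p-1}$, with $(D^jT(u)(h_1,\ldots,h_j))y=D_1^jG(u,y)(h_1,\ldots,h_j)$ for $j\leq p-1$, and only the last differentiation uses (ii).

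Your downward argument is also valid. Local boundedness of $T_{j+1}$ gives a local operator-norm Lipschitz bound on $T_j$. Together with $\|T_j(u_0)\|<\infty$, this gives local boundedness of $T_j$, and so on down to $j=0$; you leave that last step implicit, but it is immediate. This route yields slightly more: every $T_j$ with $j<p$ is locally Lipschitz. It also uses the joint $\mathcal{C}^p$ hypothesis only to know that each $y\mapsto D_1^jG(u,y)$ is bounded, so it would still work if one assumed only $G(\cdot,y)\in\mathcal{C}^p$ for each $y$ together with that boundedness. The cost is a two-pass induction (continuity top-down, then differentiability) where the paper's route is immediate.
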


\begin{proof}
Since
\[
T(u)=D_2G(u,0),
\]
we already have $T\in\mathcal{C}^{p-1}(U,\mathcal{L}(Y,E))$. For $0\leq j\leq p-1$, differentiation through the continuous linear evaluation at $y$ gives
\begin{equation}\label{eq:lower-operator-derivatives}
\bigl(D^jT(u)(h_1,\ldots,h_j)\bigr)y
=D_1^jG(u,y)(h_1,\ldots,h_j),
\end{equation}
with the usual interpretation when $j=0$.

For every $u\in U$, the mapping
\[
y\longmapsto D_1^pG(u,y)\in\mathcal{L}^p(X;E)
\]
is linear, by linearity of $G$ in its second variable, and continuous, by the joint $\mathcal{C}^p$ regularity of $G$. Hence it defines a bounded multilinear mapping
\[
B_p(u)\in\mathcal{L}^p\bigl(X;\mathcal{L}(Y,E)\bigr)
\]
through the identity
\begin{equation}\label{eq:top-operator-derivative-candidate}
\bigl(B_p(u)(h_1,\ldots,h_p)\bigr)y
=D_1^pG(u,y)(h_1,\ldots,h_p).
\end{equation}
Taking suprema over the unit vectors in each argument yields
\begin{equation}\label{eq:top-operator-norm-identity}
\|B_p(u)-B_p(u_0)\|
=\sup_{\|y\|\leq1}
\bigl\|D_1^pG(u,y)-D_1^pG(u_0,y)\bigr\|_{\mathcal{L}^p(X;E)}.
\end{equation}

Assume (ii). By \eqref{eq:top-operator-norm-identity}, $B_p$ is continuous in operator norm. Fix $u\in U$. For $h_p\in X$ sufficiently small, the segment $[u,u+h_p]$ is contained in $U$. For each fixed $y\in Y$, write $h=(h_1,\ldots,h_p)\in X^p$ and $h_{-p}:=(h_1,\ldots,h_{p-1})$, with the empty-tuple convention when $p=1$. The fundamental theorem of calculus applied to $D_1^{p-1}G(\cdot,y)$ gives
\begin{align*}
D_1^{p-1}G(u+h_p,y)h_{-p}-D_1^{p-1}G(u,y)h_{-p}
-D_1^pG(u,y)h=\int_0^1\bigl(D_1^pG(u+th_p,y)-D_1^pG(u,y)\bigr)h\,dt.
\end{align*}
Using \eqref{eq:lower-operator-derivatives} and \eqref{eq:top-operator-derivative-candidate}, and taking the supremum over $\|y\|\leq1$ and the remaining unit arguments, we obtain
\begin{align*}
\|D^{p-1}T(u+h_p)-D^{p-1}T(u)-B_p(u)(h_p,\cdot,\ldots,\cdot)\|&\leq\|h_p\|\sup_{0\leq t\leq1}\|B_p(u+th_p)-B_p(u)\|\\
&=o(\|h_p\|).
\end{align*}
When $p=1$, one reads $D^0T = T$ and omits the remaining arguments in these expressions. The last estimate follows from the continuity of $B_p$ at the fixed point $u$. Consequently, $D^{p-1}T$ is Fr\'echet differentiable and $D^pT=B_p$, under the natural identification of the corresponding spaces of multilinear mappings. Since $B_p$ is continuous, $T$ is of class $\mathcal{C}^p$.

Conversely, if (i) holds, differentiation of $G(u,y)=T(u)y$ with $y$ fixed gives $B_p(u)=D^pT(u)$. Therefore \eqref{eq:top-operator-norm-identity} and the continuity of $D^pT$ imply (ii). Finally, linearity in $y$ shows that the supremum over the unit ball equals that over the unit sphere when $Y\neq\{0\}$.
\end{proof}

The preceding lemma gives the announced characterization directly in terms of the squared-distance gradient in Poly--Raby coordinates. 

\begin{theorem}\label{thm:strong-characterization}
Let $M\subset\mathcal{H}$ be a closed set, let $k\geq2$, and let $\bar{x}\in M$. After translating $\bar{x}$ to $0$, the following assertions are equivalent:
\begin{enumerate}
\item $M$ is a $\mathcal{C}^k$-submanifold near $0$.
% \item $d_M^2$ is of class $\mathcal{C}^k$ near $0$, and the Poly--Raby coordinates $\Psi$ of Proposition~\ref{prop:PolyRaby-coordinates} can be chosen with image $V\times B_{Z^\perp}(0,\delta)$, where $V$ is an open neighborhood of $0$ in $Z=\rg(DP_M(0))$ and $\delta>0$, so that the mapping $g:V\times B_{Z^\perp}(0,\delta)\to Z$, defined by
% \[
% g(z,w):=-\frac12\pi_Z\nabla d_M^2\bigl(\Psi^{-1}(z,w)\bigr),
% \]
% satisfies, for some $0<\rho<\delta$ and every $z_0\in V$,
% \begin{equation}\label{eq:distance-pointwise-equicontinuity}
% \lim_{z\to z_0}\sup_{\substack{w\in Z^\perp\\\|w\|\leq\rho}}
% \bigl\|D_1^{k-1}g(z,w)-D_1^{k-1}g(z_0,w)\bigr\|_{\mathcal{L}^{k-1}(Z;Z)}=0.
% \end{equation}
\item $d_M^2$ is of class $\mathcal{C}^k$ near $0$ and  for the Poly--Raby coordinates $\Psi$ of Proposition~\ref{prop:PolyRaby-coordinates}, there exists $V$ neighborhood of $0$ in $Z=\rg(DP_M(0))$ and $\rho>0$ such that the function
\[
g(z,w):=-\frac12\pi_Z\nabla d_M^2\bigl(\Psi^{-1}(z,w)\bigr),
\]
is well-defined for $(z,w)\in V\times B_{Z^{\perp}}(0,\rho)$ and it verifies that
\begin{equation}\label{eq:distance-pointwise-equicontinuity}
\{D_1^{k-1}g(\cdot,w):w\in Z^\perp,\ \|w\|\leq\rho\}\text{ is equicontinuous at each }z_0\in V.
\end{equation}
\end{enumerate}
% In (ii), equicontinuity is taken with respect to the operator norm of $\mathcal{L}^{k-1}(Z;Z)$, meaning that for every $z_0\in V$ one has that
% \begin{equation*}
% \lim_{z\to z_0}\sup_{\substack{w\in Z^\perp\\\|w\|\leq\rho}}
% \bigl\|D_1^{k-1}g(z,w)-D_1^{k-1}g(z_0,w)\bigr\|_{\mathcal{L}^{k-1}(Z;Z)}=0.
% \end{equation*}
\end{theorem}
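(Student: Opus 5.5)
The plan is to reduce both implications to Lemma~\ref{lem:pointwise-equicontinuity}, applied with $X=Z$, $Y=Z^\perp$, $E=Z$, $U=V$, $p=k-1$, $T(z)=D\theta(z)^*$ and $G(z,w)=D\theta(z)^*w$. The bridge is identity~\eqref{eq:PolyRaby-adjoint} of Proposition~\ref{prop:PolyRaby-coordinates}: on $V\times B_{Z^\perp}(0,\delta)$ we have $g(z,w)=D\theta(z)^*w$. Hence, on the ball of radius $\rho$, condition~\eqref{eq:distance-pointwise-equicontinuity} is exactly condition (ii) of Lemma~\ref{lem:pointwise-equicontinuity}, up to the rescaling $w\mapsto w/\rho$, which multiplies the supremum by $\rho$ by linearity in $w$. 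Moreover, $z\mapsto D\theta(z)^*$ is $\mathcal{C}^{k-1}$ in operator norm if and only if $D\theta$ is, since adjunction is a linear isometry. In that case $\theta\in\mathcal{C}^k$.

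For (ii)$\Rightarrow$(i), first invoke Theorem~\ref{thm:weak-characterization}. By its proof, the Poly--Raby graph map $\theta$ is weakly $\mathcal{C}^k$ on $V$, after shrinking $V$ so that it is contained in both the $V$ of Proposition~\ref{prop:PolyRaby-coordinates} and the $V$ of the statement, and taking $\rho\le\delta$. Lemma~\ref{lem:joint-adjoint} then gives that $G$ is jointly $\mathcal{C}^{k-1}$ on $V\times Z^\perp$, which is the standing hypothesis of Lemma~\ref{lem:pointwise-equicontinuity}. Fix $w$ with $\|w\|\le1$. Then $D_1^{k-1}G(z,w)=\rho^{-1}D_1^{k-1}g(z,\rho w)$, because $G(\cdot,\rho w)=g(\cdot,\rho w)$ on $V$. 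So the equicontinuity hypothesis gives condition (ii) of the lemma, and thus $T\in\mathcal{C}^{k-1}$. Consequently $D\theta\in\mathcal{C}^{k-1}$, i.e.\ $\theta\in\mathcal{C}^k(V,Z^\perp)$. Since $\theta$ represents $M$ locally as a graph with $\theta(0)=0$ and $D\theta(0)=0$, $M$ is a $\mathcal{C}^k$-submanifold near $0$.

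For (i)$\Rightarrow$(ii), take any local $\mathcal{C}^k$ graph representation $\tilde\theta$ over $T_0M$. It is in particular weakly $\mathcal{C}^k$, so Theorem~\ref{thm:weak-characterization} gives that $d_M^2$ is $\mathcal{C}^k$ near $0$. Now build the Poly--Raby coordinates as in Proposition~\ref{prop:PolyRaby-coordinates}, with $Z=\rg(DP_M(0))$ and graph map $\theta$. This is the step I expect to be the main obstacle: one has to identify $\theta$ with $\tilde\theta$ near $0$, so that $\theta$ inherits $\mathcal{C}^k$ regularity. I would argue as follows. $Z$ is the tangent space $T_0M$, since $D\theta(0)=0$ and the graph over the tangent space is unique (\cite[Theorem~2.1]{Salas-Thibault:2019}). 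Both $\theta$ and $\tilde\theta$ then represent $M$ as graphs over $Z$ near $0$, so they coincide on a smaller neighborhood. An alternative is Proposition~\ref{prop:PolyRaby-from-graph} with $r=k-1$: $\psi$ is $\mathcal{C}^{k-1}$ because $\tilde\theta\in\mathcal{C}^k$, and its inverse is $\Psi$, which forces $\theta(z)=\pi_{Z^\perp}\psi(z,0)=\tilde\theta(z)$.

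Once $\theta\in\mathcal{C}^k$, the map $T(z)=D\theta(z)^*$ is $\mathcal{C}^{k-1}$, and $G$ is jointly $\mathcal{C}^{k-1}$ as the composition of $T$ with the bilinear evaluation map. The implication (i)$\Rightarrow$(ii) of Lemma~\ref{lem:pointwise-equicontinuity} then yields equicontinuity of $\{D_1^{k-1}G(\cdot,y):\|y\|\le1\}$ at every point of $V$. Finally, choose $\rho\le\delta$ so that $g$ is well defined on $V\times B_{Z^\perp}(0,\rho)$ and equals $G$ there. Linearity in $w$ transfers the equicontinuity to $\{D_1^{k-1}g(\cdot,w):\|w\|\le\rho\}$, which is~\eqref{eq:distance-pointwise-equicontinuity}.
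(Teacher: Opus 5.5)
Your proof is correct and follows the paper's argument: you identify $g(z,w)=D\theta(z)^*w$ via \eqref{eq:PolyRaby-adjoint}, get joint $\mathcal{C}^{k-1}$ regularity of $G$ from Lemma~\ref{lem:joint-adjoint} (or directly from $\theta\in\mathcal{C}^k$ in the forward direction), apply Lemma~\ref{lem:pointwise-equicontinuity} with $p=k-1$, rescale in $w$, and use that taking adjoints is an isometry. You also explicitly identify the Poly--Raby graph map $\theta$ with a given $\mathcal{C}^k$ graph over $T_0M$; the paper leaves this implicit in its final ``hence to (i)'', and your uniqueness-of-local-graphs argument fills it correctly.
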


\begin{proof}
A $\mathcal{C}^k$-submanifold is weakly $\mathcal{C}^k$, so Theorem~\ref{thm:weak-characterization} gives $d_M^2\in\mathcal{C}^k$ under (i). It therefore suffices to work under this smoothness assumption and determine when the graph $\theta$ from Proposition~\ref{prop:PolyRaby-coordinates} is of class $\mathcal{C}^k$.

By Theorem~\ref{thm:weak-characterization} and its proof, $\theta$ is weakly $\mathcal{C}^k$. Identity~\eqref{eq:PolyRaby-adjoint} gives
\[
g(z,w)=D\theta(z)^*w.
\]
Thus $g$ extends linearly in $w$ to
\[
G:V\times Z^\perp\to Z,\qquad G(z,w)=D\theta(z)^*w,
\]
which is jointly of class $\mathcal{C}^{k-1}$ by Lemma~\ref{lem:joint-adjoint}. Apply Lemma~\ref{lem:pointwise-equicontinuity} with $p=k-1$ and $T(z)=D\theta(z)^*$. Linearity in $w$ shows that \eqref{eq:distance-pointwise-equicontinuity} is equivalent to
\[
T\in\mathcal{C}^{k-1}\bigl(V,\mathcal{L}(Z^\perp,Z)\bigr).
\]
Since taking the adjoint is a continuous linear isometry between $\mathcal{L}(Z,Z^\perp)$ and $\mathcal{L}(Z^\perp,Z)$, this is equivalent to $\theta\in\mathcal{C}^k(V,Z^\perp)$, and hence to (i). The same linearity proves that the condition is independent of the choice of $\rho\in(0,\delta)$.
\end{proof}

\section{Conclusions}\label{sec:conclusions}

In this work we have completed the extension of Poly and Raby's theorem to the infinite dimensions setting, by splitting the characterizations for smoothness of $d_M^2$ and smoothness of $M$ as submanifold. The final picture of smoothness characterizations for submanifolds in infinite dimensions is given by the following table:
\begin{table}[ht]
\centering
\renewcommand{\arraystretch}{1.25}
\begin{tabular}{cl}
\toprule
Regularity hypothesis on $d_M^2$ near $\bar{x}$ & Equivalent regularity of $M$ near $\bar{x}$ \\
\midrule
$d_M^2\in\mathcal{C}^{k}$ & $M$ is a weakly $\mathcal{C}^{k}$-submanifold \\
$d_M^2\in\mathcal{C}^{k}$ +~\eqref{eq:distance-pointwise-equicontinuity} & $M$ is a $\mathcal{C}^{k}$-submanifold \\
$d_M^2\in\mathcal{C}^{k,0}$ & $M$ is a $\mathcal{C}^{k,0}$-submanifold \\
$d_M^2\in\mathcal{C}^{k,1}$ & $M$ is a $\mathcal{C}^{k,1}$-submanifold \\
\bottomrule
\end{tabular}
\caption{Smoothness characterizations for submanifolds in Hilbert spaces, for $k\geq2$. Conditions 1 to 3 collapse for finite codimension of $M$.}
\label{tab:characs-full}
\end{table}

Locally, $\Psi^{-1}$ identifies the zero section $V\times\{0\}$ with $M$ and each normal-coordinate fiber $\{z\}\times B_{Z^\perp}(0,\delta)$ with a neighborhood of $(z,\theta(z))$ in its affine normal space $(z,\theta(z))+N_{(z,\theta(z))}M$. Using the graph representation $\theta$, one sees that $\Psi^{-1}(z,w)$ consists of the point $(z,\theta(z))$ plus the normal deviation $(-D\theta(z)^*w,w) \in N_{(z,\theta(z))}M$. 

The key point in Section~\ref{sec:PolyRaby-characterization} is that the $\mathcal{C}^k$-smoothness of $d_M^2$ is not directly related to the $\mathcal{C}^k$-smoothness of $\theta$, but rather to the $\mathcal{C}^{k-1}$-smoothness of the normal deviation, that is, to the joint $\mathcal{C}^{k-1}$-smoothness of $(z,w)\mapsto D\theta(z)^*w$. This is readily equivalent to the joint $\mathcal{C}^k$-smoothness of $(z,w)\mapsto \langle \theta(z),w\rangle$, which leads to the concept of weakly $\mathcal{C}^k$-submanifolds. 

In Section~\ref{sec:Pure-submanifolds}, condition~\eqref{eq:distance-pointwise-equicontinuity} can be interpreted as a pointwise equicontinuity condition on the last tangential derivative of $\pi_Z\circ \nabla d_M^2$, uniform with respect to the normal deviation $w\in Z^{\perp}$ in Poly--Raby coordinates. This condition is strictly stronger than plain $\mathcal{C}^k$-smoothness of $d_M^2$, as Section~\ref{sec:Counterexample} shows, and strictly weaker than local uniform continuity of $D^kd_M^2$, see \cite{Salas-Thibault:2019}.

\paragraph{Acknowledgments:} This research was supported by FONDECYT grant 1251159 and CMM BASAL fund FB210005 for center of excellence, ANID-Chile.

\paragraph{Use of AI:} This work was developed with GPT-6 Astra. Example~\ref{ex:Counterexample} was obtained directly with it. The example revealed the notion of weakly $\mathcal{C}^k$ functions, which was previously unknown to the author. With this insight, the rest of the article was quickly developed, also with the support of GPT-6 Astra. After the first draft, the manuscript was carefully revised, validated, and shortened by the author. Most of the references were included during this process.

\bibliographystyle{abbrvnat}
\bibliography{biblioSmoothDistance-2}

\rule{5 cm}{0.5 mm} \bigskip
    \newline\noindent David SALAS
	
	\medskip
	
	\noindent Instituto de Ciencias de la Ingenier\'{i}a, Universidad de
	O'Higgins\newline Av. Libertador Bernardo O'Higgins 611, Rancagua, Chile
	\smallskip
	
	\noindent E-mail: \texttt{david.salas@uoh.cl} \newline\noindent
	\texttt{http://davidsalasvidela.cl}, \\
    ORCID number: 0000-0003-3924-366X
\end{document}